\documentclass[final,onefignum,onetabnum]{siamart220329}

\usepackage{braket,amsfonts}
\usepackage{array}
\usepackage{bm}
\usepackage{amsmath,amssymb}
\usepackage{mathtools}
\usepackage{stmaryrd}
\usepackage{blkarray}
\usepackage{enumerate}
\usepackage{subcaption}
\usepackage{pgfplots}

\newsiamthm{examples}{Example}
\newsiamthm{prop}{Proposition}
\newsiamthm{claim}{Claim}
\newsiamremark{remark}{Remark}
\newsiamremark{hypothesis}{Hypothesis}
\newsiamremark{assumption}{Assumption}
\crefname{hypothesis}{Hypothesis}{Hypotheses}

\usepackage[noend]{algpseudocode}

\usepackage{footnote}
\usepackage{graphicx,epstopdf}

\usepackage{tabularx}

\Crefname{ALC@unique}{Line}{Lines}
\allowdisplaybreaks

\usepackage{amsopn}
\DeclareMathOperator{\range}{range}

\DeclareMathOperator*{\argmin}{arg\,min}

\newcommand{\R}{\mathbb{R}}

\newcommand{\E}{\mathbb{E}}

\newcommand{\bigO}{\mathcal{O}}

\newcommand{\norm}[1]{\left\lVert#1\right\rVert}

\usepackage[normalem]{ulem}

\newcommand{\ignore}[1]{}

\usepackage{xspace}
\usepackage{bold-extra}
\usepackage[most]{tcolorbox}

\colorlet{texcscolor}{blue!50!black}
\colorlet{texemcolor}{red!70!black}
\colorlet{texpreamble}{red!70!black}
\colorlet{codebackground}{black!25!white!25}

\lstdefinestyle{siamlatex}{%
  style=tcblatex,
  texcsstyle=*\color{texcscolor},
  texcsstyle=[2]\color{texemcolor},
  keywordstyle=[2]\color{texemcolor},
  moretexcs={cref,Cref,maketitle,mathcal,text,headers,email,url},
}

\tcbset{%
  colframe=black!75!white!75,
  coltitle=white,
  colback=codebackground, 
  colbacklower=white, 
  fonttitle=\bfseries,
  arc=0pt,outer arc=0pt,
  top=1pt,bottom=1pt,left=1mm,right=1mm,middle=1mm,boxsep=1mm,
  leftrule=0.3mm,rightrule=0.3mm,toprule=0.3mm,bottomrule=0.3mm,
  listing options={style=siamlatex}
}

\newtcblisting[use counter=example]{example}[2][]{%
  title={Example~\thetcbcounter: #2},#1}

\newtcbinputlisting[use counter=example]{\examplefile}[3][]{%
  title={Example~\thetcbcounter: #2},listing file={#3},#1}

\DeclareTotalTCBox{\code}{ v O{} }
{ 
  fontupper=\ttfamily\color{black},
  nobeforeafter,
  tcbox raise base,
  colback=codebackground,colframe=white,
  top=0pt,bottom=0pt,left=0mm,right=0mm,
  leftrule=0pt,rightrule=0pt,toprule=0mm,bottomrule=0mm,
  boxsep=0.5mm,
  #2}{#1}

\patchcmd\newpage{\vfil}{}{}{}
\begin{tcbverbatimwrite}{tmp_\jobname_header.tex}
\title{Nystr\"om method for symmetric indefinite matrices\thanks{Date: \today 
\funding{YN is supported by EPSRC grants EP/Y010086/1 and EP/Y030990/1. TP is supported by the RandESC project, funded by the Swiss Platform for Advanced Scientific Computing (PASC).}
}
}

\author{
Yijia Chen\thanks{Mathematical Institute, University of Oxford, Oxford, OX2 6GG, UK, (
\email{yijia.chen@balliol.ox.ac.uk},
\email{yuji.nakatsukasa@maths.ox.ac.uk}, \email{anjali.narendran@maths.ox.ac.uk}).}
\and
Yuji Nakatsukasa\footnotemark[2]
\and Anjali Narendran\footnotemark[2] \and Taejun Park\thanks{Institute of Mathematics, EPF Lausanne, 1015 Lausanne, Switzerland (\email{taejun.park@epfl.ch}).}}

\headers{Indefinite Nystr\"om method}{
Yijia Chen, 
Yuji Nakatsukasa, Anjali Narendran, and Taejun Park}
\end{tcbverbatimwrite}
\input{tmp_\jobname_header.tex}

\ifpdf
\hypersetup{pdftitle={Nystr\"om method for symmetric indefinite matrices} }
\fi

\begin{document}
\maketitle

\begin{tcbverbatimwrite}{tmp_\jobname_abstract.tex}
\begin{abstract}
The Nystr\"om method approximates $A\approx A(\,:\,,I)A(I,I)^{\dagger} A(\,:\,,I)^{\top}=CA(I,I)^{\dagger} C^{\top}$, where $C:=A(:,I)\in\mathbb{R}^{n\times r}$ is a column subset matrix of $A$. When applied to symmetric but indefinite matrices, the Nystr\"om method can fail because the core matrix $A(I,I)$ may severely underestimate the eigenvalues of $A$ and may become (nearly) singular. We address this issue by developing and analyzing an algorithm that carefully chooses $\widehat{M}\in\mathbb{R}^{r\times r}$ in place of $A(I,I)^\dagger$ by solving the two-sided sketched least-squares problem $\min_{M}\|X(A-CMC^{\top})X^{\top}\|_F$, where $X\in\mathbb{R}^{t\times n}$ is a random sketch matrix. We study in detail the cases where $X$ is a Gaussian or a leverage score sampling (LSS) matrix, and show that with oversampling $t>r$ the residual $\|A-C\widehat{M}C^{\top}\|_*$ is comparable to $\min_{M}\|A-CMC^{\top}\|_*$. For the Gaussian sketch, we require $t=\mathcal{O}(r)$ samples; for LSS, we show that $t=\mathcal{O}(r \log r)$ samples suffice for the theoretical guarantee, with the LSS approach carrying the advantage that once a set of $t$ row indices is identified, the approximation requires only $t^{2}$ matrix-entry evaluations to find $\widehat{M}$, given $C$. We illustrate our results with synthetic examples and applications to kernel methods.
\end{abstract}

\begin{keywords}
Nystr\"om method, symmetric indefinite matrices, sketching, kernel methods
\end{keywords}

\begin{MSCcodes}
65F55, 68W20
\end{MSCcodes}
\end{tcbverbatimwrite}
\input{tmp_\jobname_abstract.tex}

\section{Introduction} \label{sect:intro} Real symmetric matrices arise in many contexts, including kernel methods in machine learning~\cite{CER05}. In many practical cases, these kernel matrices exhibit rapidly decaying singular values, making them well-suited for low-rank approximation~\cite{DPM05, AGMWM13}. The Nystr\"om method is well-suited to this regime for symmetric positive semidefinite (SPSD) matrices and has been used frequently by the machine learning community for kernel-based methods \cite{pmlr-v38-anderson15,AGMWM13}. 

Let $A \in \mathbb{R}^{n \times n}$ be an SPSD matrix and let $r<n$ be the target rank, often with $r\ll n$. The matrix $C := A(:,I) \in \mathbb{R}^{n \times r}$ is a column subset matrix of $A$, where $I$ denotes the selected column index set. The classical Nystr\"om method takes the form 
\begin{equation} \label{eqn_nystrom}
    A \approx C\, A(I,I)^{\dagger} C^{\top}.
\end{equation}
This method is both accurate and efficient: its computational cost is $\mathcal{O}(nr + r^3)$ (assuming $I$ is given and using the entry access model, where evaluating an entry of $A$ costs $O(1)$ operations), and the approximation error satisfies
\begin{equation*}
    \|A - C A(I,I)^{\dagger} C^{\top}\|_* \le 
    (r+1)\|A-A_r\|_*
    =(r+1)\bigl(\sigma_{r+1}(A) + 
    \cdots + \sigma_n(A)\bigr)
\end{equation*} for an appropriately chosen index set $I$~\cite{osinsky2023closelaa,cortinovis2024adaptive}; here $A_r$ is the rank-$r$ truncated SVD, i.e., the optimal rank-$r$ approximation. The above result shows that for any SPSD matrix, there is a Nystr\"om approximation whose error is within a factor $r+1$ of the truncated SVD of the same rank. 

However, in practice, many important applications involve symmetric \emph{indefinite} matrices. Examples include the jittering and tangent distance kernels~\cite{indefinitekernel}, as well as low-rank approximations arising in natural language processing~\cite{devlin-etal-2019} and in non-metric similarity or dissimilarity measures for social networks~\cite{GISBRECHT2015643}. Applying the standard Nystr\"om method to indefinite matrices is problematic: the submatrix $A(I,I)$ may severely underestimate the eigenvalues of $A$, making it nearly singular, thereby causing instability and unbounded approximation errors~\cite{Nakatsukasa2023}.

In prior work \cite{Nakatsukasa2023}, the \emph{indefinite Nystr\"om method} was proposed, which uses random embeddings and incorporates oversampling to, say, $1.5$ times the target rank, followed by truncation of the core matrix back to the target rank to stabilize the approximation. When $A$ is sparse, that approach employs a random embedding with computational cost $\mathcal{O}(\xi\, \mathrm{nnz}(A) + r^3)$, where $\xi = 8$. For dense matrices, a subsampled randomized trigonometric transform (SRTT) sketch is used, achieving total cost $\mathcal{O}(n^2 \log r + r^3)$. While the algorithm of \cite{Nakatsukasa2023} performs robustly in numerical experiments, its theoretical analysis does not correspond to the proposed algorithm. Moreover, in that approach, $X$ needs to be dense, and therefore the entire matrix $A$ needs to be accessed, making the approach expensive. This is overcome in this paper by using a leverage score sampling (LSS) matrix.

In this work, we introduce a simpler and more efficient approach for constructing the middle matrix in \eqref{eqn_nystrom}. Our method also uses sketching and oversampling but avoids some of the computational overhead of previous algorithms. We establish theoretical performance bounds and demonstrate the effectiveness of our algorithm through numerical experiments. Let us emphasize that $C$ is assumed to be given, and the goal is to find a good $r\times r$ matrix $M$ such that $CMC^\top\approx A$; the choice of $C$ is clearly a separate matter, and a plethora of methods are now available, including 
sketch-and-pivot-based methods~\cite{dong2023simpler}, 
recent methods with stronger theoretical guarantees while maintaining practical speed~\cite{cortinovis2024adaptive,osinsky2023closelaa}, and range-finding algorithms~\cite{halko2010radsvd} (in which case $C$ is not restricted to be a subset of columns).

\subsection{Related work} \label{subsec:related}
Like the Nystr\"om method, many existing methods for low-rank approximation of a symmetric matrix $A$ rely on projections onto an approximate column space. These techniques often yield approximations of the general form $A \approx CC^\dagger A (C^{\top})^\dagger C^{\top}$, where the columns of $C$ approximate the column space of $A$. A well-known example is randomized SVD, which can be expressed as $A \approx QQ^{\top}AQQ^{\top}$, with $Q = \operatorname{orth}(AX^{\top})$ for a random sketching matrix $X$ \cite{halko2010radsvd}. 
A direct application of Osinsky's algorithm~\cite{osinsky2023closelaa} yields the bound $\|A - CC^\dagger A (C^{\top})^\dagger C^{\top}\|_F \leq \sqrt{2(1+r)} \|A-VV^{\top}A\|_F$, where $V \in \mathbb{R}^{n\times r}$ is the rank-$r$ column space approximator used to obtain $C$. CUR matrix decompositions provide a related framework that also yields interpretable low-rank approximations \cite{mahoneya2009CUR} and allows for fast approximation of parameter-dependent matrices~\cite{park2025low}. Our proposed method using two-sided sketching shares the general $CMC^{\top}$ form, as shown in \Cref{subsec:twosidedls}, but allows for computation based on subsampling $A$, thereby having complexity strictly lower than $O(n^2)$, and performs most favorably among all existing methods in our empirical comparisons in \Cref{sec:experiments}.

\subsection{List of contributions} 
To approximate a real symmetric matrix $A \approx CMC^{\top}$, we propose a faster approach based on solving a sketched two-sided least-squares problem. We show that this approach produces a middle matrix $M$ that is theoretically near-optimal and also performs competitively with existing methods.

Instead of solving the full problem
\[M^*:=\underset{M\in\mathbb{R}^{r\times r}}\argmin\| A-CMC^{\top} \|_{F}, 
\]
we solve the sketched problem
\[\widehat{M} := \underset{M\in \mathbb{R}^{r\times r}}{\text{argmin}}\|X (A-CMC^{\top})X^{\top}\|_{F},
\]
and obtain the following relative error bounds.
\begin{enumerate}
    \item \emph{General bound.} \Cref{thm:general} establishes, for any sketching matrix $X$ with orthonormal rows and any unitarily invariant norm $\|\cdot\|$,
    \[
    \|A - C\widehat{M}C^{\top}\|\leq \left( 1+ \frac{1}{\sigma_{\min}(XQ)^2}\right)\|A - C M^* C^{\top}\|,
    \] 

    \item \emph{Gaussian sketch.} \Cref{main_gaussian} analyzes the case $G \in \mathbb{R}^{t \times n}$ with $t = \mathcal{O}(r)$, and shows an $n$-independent nuclear-norm bound
    \[
    \|A - C\widehat{M}C^{\top}\|_* \leq \left(1+c_2(r,t,\epsilon)\sqrt{r}\right)\|A - C M^* C^{\top}\|_*,
    \]
    where $c_2$ is an order-1 constant. Here, the sketched approximation achieves near-optimal accuracy, differing from the optimal approximation by a factor that is at most on the order of $\sqrt{r}$. 
    
    \item \emph{Leverage score sampling.} \Cref{main_leverage} considers $S \in \mathbb{R}^{t \times n}$ with $t = \mathcal{O}(r \log r)$ and proves that
    \[
    \|A - C\widehat{M}C^{\top}\|_* \leq (1+c_5(r,t,\epsilon)\sqrt{r})\|A - C M^* C^{\top}\|_*,
    \]
    where $c_5$ is an order-1 constant. LSS is computationally efficient: when an index set is identified, the low-rank approximation can be computed using no more than $t^2$ additional entry evaluations.
\end{enumerate}

\subsection{Notation} Throughout the paper $\|\cdot \|_2$ is the spectral norm of a matrix (or the vector $\ell_2$-norm), $\|\cdot\|_F$ is the Frobenius norm, and $\| \cdot\|_*$ is the nuclear (or trace) norm. We use $\dagger$ for the Moore--Penrose pseudoinverse~\cite{pseudoinverse1976}. Unless otherwise specified, $\sigma_{i}(\cdot)$ refers to the $i$th largest singular value and $\sigma_{\min}$ refers to the smallest, i.e., for an $n\times r (n\geq r)$ matrix $C$, $\sigma_{\min}(C)=\sigma_r(C)$. The Kronecker delta is $\delta_{ij} = \mathbb{I}_{i=j}$, while $\delta$ without subscripts is a probability parameter. Constants $c_k(\cdot)$ denote order-1 quantities depending on their arguments. The $i$th canonical basis vector of $\mathbb{R}^{n}$ is $e_{i}$. For a matrix $B$, $B_{ij}$ refers to the $(i,j)$ entry, $B_{i,*}$ the $i$th row, and $B_{*, i}$ the $i$th column. We say that an $n\times r (n\geq r)$ matrix $U$ is orthonormal if its columns are orthonormal.

Throughout, $A$ denotes an $n\times n$ real symmetric matrix for which we seek a low-rank approximation. The matrix $C$ is an $n\times r$ matrix approximating the span of $A$, with left singular vector matrix $U$; in practical situations $C$ is often a column subset of $A$. $M$ is the middle matrix in the approximation $A\approx CMC^\top$, with $M^*$ and $\widehat{M}$ denoting the optimal and suboptimal middle matrices in the full and sketched problems, respectively. For the $t\times n$ sketch matrix, we distinguish between $X$, $G$, and $S$: $X$ denotes a general subspace embedding, $G$ denotes a Gaussian matrix with entries i.i.d. $\mathcal{N}(0,1/t)$, and $S$ denotes a leverage score sampling (LSS) matrix.

\section{Preliminaries} This section introduces the mathematical preliminaries necessary for the subsequent proofs.

\subsection{Sketching and subspace embedding}
Sketching is a key tool in randomized numerical linear algebra, and the subspace embedding property forms the theoretical foundation for sketching.

\begin{definition}[$\ell_2$-subspace embedding] A $(1\pm \epsilon)$ $\ell_2$-subspace embedding for the column space of $A\in \Bbb{R}^{n\times d}$ $(n>d)$ is a matrix $X\in \mathbb{R}^{t\times n}$ $(d\le t<n)$ such that for all $x\in \Bbb{R}^{d}$ \cite{SketchingAsATool},
\[
(1- \epsilon)\| Ax\|_2^2\leq \| XAx\|_2^2 \leq (1+ \epsilon)\| Ax\|_2^2.
\]    
\end{definition}

\begin{lemma}[\cite{SketchingAsATool}] \label{boundsingularvalue1} Let $U\in \Bbb{R}^{n\times d}$ have orthonormal columns, and let $X\in \Bbb{R}^{t\times n}$ be a $(1\pm \epsilon)$ $\ell_2$-subspace embedding of $U$. Then
\begin{enumerate}[(i)]
    \item $\|XUy\|_2^2= (1\pm \epsilon)\| Uy\|_2^2=(1\pm \epsilon)\| y\|_2^2$ for all $y\in \Bbb{R}^{d}$,
    \item $\|U^{\top}X^{\top}XU-I_d\|_2\leq \epsilon$. 
\end{enumerate}
\end{lemma}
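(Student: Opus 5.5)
Both parts follow directly from the definition of a $(1\pm\epsilon)$ $\ell_2$-subspace embedding for the column space of $U$, together with the fact that orthonormal columns preserve Euclidean norms.

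For part (i), every vector in the column space of $U$ has the form $Uy$ with $y\in\mathbb{R}^d$. The definition, applied with $A=U$ and $x=y$, gives
\[
(1-\epsilon)\|Uy\|_2^2\le\|XUy\|_2^2\le(1+\epsilon)\|Uy\|_2^2,
\]
which is the first equality in the notation $(1\pm\epsilon)$. Since $U^\top U=I_d$, we have $\|Uy\|_2^2=y^\top U^\top U y=\|y\|_2^2$, which gives the second equality.

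For part (ii), set $B:=U^\top X^\top X U-I_d$. This matrix is symmetric, so its spectral norm is given by the Rayleigh quotient:
\[
\|B\|_2=\max_{\|y\|_2=1}\bigl|y^\top B y\bigr|.
\]
For a unit vector $y$ we compute
\[
y^\top B y=\|XUy\|_2^2-\|y\|_2^2 .
\]
By part (i), this quantity lies in $[-\epsilon\|y\|_2^2,\ \epsilon\|y\|_2^2]=[-\epsilon,\epsilon]$. Taking the maximum over unit vectors $y$ gives $\|B\|_2\le\epsilon$.

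The proof is elementary and has no real obstacle. The one point requiring care is in part (ii): the reduction of the spectral norm to the maximum of $|y^\top B y|$ over unit vectors holds only because $B$ is symmetric. For a general matrix one would need $\max|x^\top B y|$ over pairs of unit vectors, and the embedding bound alone would not control that directly.
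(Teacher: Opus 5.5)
Your proof is correct: part (i) is the subspace-embedding definition applied with $A=U$ together with $\|Uy\|_2=\|y\|_2$. Part (ii) is the Rayleigh-quotient characterization of the spectral norm of the symmetric matrix $U^{\top}X^{\top}XU-I_d$. The paper gives no proof of its own and simply cites \cite{SketchingAsATool}, where the argument is essentially the one you give.
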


If such an embedding can be constructed without knowledge of $U$ beyond its dimension, it is called \emph{oblivious}; otherwise it is \emph{non-oblivious}.

\begin{definition}[Oblivious $\ell_2$-subspace embedding \cite{SketchingAsATool}] Suppose $\mathcal{D}$ is a distribution on $t\times n$ matrices $X$, where $t$ is some function of $n, d, \epsilon, \delta$. If with probability at least $1-\delta$, for any fixed $n\times d$ matrix $U$, a matrix $X$ drawn from $\mathcal{D}$ is a $(1\pm \epsilon)$ $\ell_2$-subspace embedding for $U$, then $\mathcal{D}$ is called an $(\epsilon, \delta)$ oblivious $\ell_2$-subspace embedding.
\end{definition}

A number of classes of (random) matrices are known to satisfy the oblivious subspace embedding property. These include Gaussian matrices, subsampled randomized trigonometric transforms (SRTTs), and sparse embeddings~\cite{MartinssonTroppacta}.
By contrast, leverage score sampling also yields an $\epsilon$-subspace embedding with $t=O(d\log d)$~\cite[Ch.~2]{SketchingAsATool}, but is non-oblivious; it uses information on $U$, namely its row-norms. 

\subsection{Solutions of symmetric two-sided least-squares problems} \label{subsec:twosidedls} 
Given a column subset $C \in \mathbb{R}^{n\times r}$ of a real symmetric matrix $A\in \mathbb{R}^{n\times n}$, the optimal middle matrix $M^*\in \mathbb{R}^{r\times r}$ is 
\begin{equation} \label{fullprob}
M^*:=\underset{M\in\mathbb{R}^{r\times r}}{\text{argmin}}\| A-CMC^{\top} \|_{F}= C^{\dagger}A(C^{\top})^{\dagger} = (C^{\top}C)^{-1}C^{\top}AC(C^{\top}C)^{-1}.
\end{equation}

Writing $C:=QR$ as the thin QR decomposition of $C$ and substituting gives
\begin{equation} \label{CURBA_full}
    M^* = R^{-1}(Q^{\top}AQ)R^{-\top}.
\end{equation}

The computation of~\eqref{CURBA_full} provides the best possible approximation for any given column subset $C$. However, evaluating $M^*$ via \eqref{CURBA_full} requires computing $Q^{\top}AQ$ at cost $\mathcal{O}(n^2 r)$ and evaluation of all $n^2$ entries of $A$, which is prohibitive when $n$ is large.

Instead, consider the sketched problem for a symmetric matrix $A\in \mathbb{R}^{n\times n}$,
\begin{align}
\widehat{M} :=& \underset{M\in \mathbb{R}^{r\times r}}{\text{argmin}}\|X(A-CMC^{\top})X^{\top}\|_{F} 
= (XC)^\dagger XAX^{\top} ((XC)^{\top})^\dagger \label{sketchprob} \\
=& (C^{\top}X^{\top}XC)^{-1}C^{\top}X^{\top}XAX^{\top}XC\left(C^{\top}X^{\top}XC\right)^{-1}\nonumber,
\end{align}
where $X\in \mathbb{R}^{t\times n}$ is a \textit{sketching matrix} with $r\leq t<n$ (usually $t \ll n$), assumed such that $XC$ has full column rank. Writing $C := QR$ and $XQ := \widetilde{Q}\widetilde{R}$,
\begin{equation} \label{CURBA_sketch}
\widehat{M}=(\widetilde{R}R)^{-1}(\widetilde{Q}^{\top} XAX^{\top}\widetilde{Q})(\widetilde{R}R)^{-\top}.
\end{equation}
Multiplying by $X$ on the left and $X^{\top}$ on the right reduces the effective matrix size from $n\times n$ to $t\times t$: instead of computing $Q^{\top}AQ$, one computes $\widetilde{Q}^{\top}XAX^{\top}\widetilde{Q}$ at cost $\mathcal{O}(t^2r)$ whenever $XAX^{\top}$ is cheap to form; for example in the entry access model and when $X$ is a subsampling matrix, $XAX^{\top}$ is a $t\times t$ principal submatrix of $A$ and can be formed using $t^2$ entry evaluations. 

The sketched problem \Cref{sketchprob} is efficient. The main goal of the subsequent sections is to show that $\widehat{M}$ performs nearly as well as $M^*$, making the sketched approximation near-optimal for all real symmetric matrices, including indefinite ones.

\section{Proposed algorithm}
Following \Cref{subsec:twosidedls}, we propose an efficient and reliable way of computing $C\widehat{M}C^{\top}$ given $C$. \Cref{alg:symmetric_cur} presents our general framework for computing the Nystr\"om method for symmetric indefinite matrices. Note that the output of \Cref{alg:symmetric_cur}, $C\widetilde{M}C^\top$, is the same as $C\widehat{M}C^\top$.

\begin{algorithm}[H]
\caption{Indefinite Nystr\"om Method}\label{alg:symmetric_cur}
 \hspace*{\algorithmicindent} \textbf{Input} A symmetric (indefinite) matrix $A \in \R^{n\times n}$, a column subset $C \in \R^{n\times r}$, sketch size $t$ (e.g., $t = 2r$ for Gaussian, $t = \lceil 2r\log r \rceil$ for LSS).\\
 \hspace*{\algorithmicindent} \textbf{Output} Middle matrix $\widetilde{M}\in \R^{r\times r}$ such that $A\approx C\widetilde{M}C^\top$.
\begin{algorithmic}[1]
\Procedure{Indef\_Nystr\"om}{$A,C,t$}
  \State Draw a subspace embedding $X \in \R^{t\times n}$ for $\operatorname{span}(C)$, e.g., Gaussian or LSS,
  \State $XC \gets X \cdot C$, $M \gets X\cdot  A \cdot X^\top$,
  \State $[Q,R] \gets \operatorname{qr}(XC)$,
  \State $\widetilde{M} \gets R^{-1}(Q^\top MQ)R^{-\top}$,
\EndProcedure
\end{algorithmic}
\end{algorithm}

\paragraph{Complexity} The costs of \Cref{alg:symmetric_cur} are as follows. First, if $C$ is formed as a column subset of $A$, this requires $nr$ entry evaluations. For the Gaussian sketch, it requires $\bigO(nt)$ to form the embedding (line $2$), and $\bigO(ntr)$ and $\bigO(n^2t)$ for forming $XC$ and $M$ (line $3$), respectively, along with all $n^2$ entry evaluations, whereas leverage score sampling (LSS) requires $\bigO(nr^2)$ to form the embedding, and $t^2$ entry evaluations to form $XC$ and $M$. Line $4$ costs $\bigO(tr^2)$, while line $5$ costs $\bigO(t^2r)$ to form $Q^\top M Q$ and $\bigO(r^3)$ to apply $R^{-1}$ and $R^{-\top}$.

Summing up, the Gaussian sketch requires all $n^2$ entry evaluations and $\bigO(n^2t)$ flops with $t = \bigO(r)$, whereas LSS requires $nr+t^2$ entry evaluations and $\bigO(nr^2+t^2r)$ flops with $t = \bigO(r\log r)$. In particular, the LSS cost is linear in $n$, both in flops and in entry evaluations, which is the key practical advantage.

In the following sections, our goal is to prove that the algorithm yields a reliable and good enough approximation for $M$ with theoretical guarantees.

\section{Accuracy of the indefinite Nystr\"om method}
\label{sec:theory}
In this section, we present our main theoretical results. We first present a relative nuclear-norm bound in \Cref{sec:generalbound}, which holds under generic assumptions on the subspace embedding. Then we specialize to Gaussian and leverage score sampling in \Cref{sec:gaussian,sec:lssampling}, respectively.

\subsection{General theoretical bounds for the sketched solution} 
\label{sec:generalbound}

\begin{theorem}\label{thm:general}
Let $A\in \Bbb{R}^{n\times n}$ be a real symmetric matrix, $C\in \Bbb{R}^{n\times r}$ a full-rank column subset of $A$ with $r<n$, with thin QR factorization $C = QR$, and $X\in\mathbb{R}^{t\times n}$ any sketch matrix with orthonormal rows such that $XQ$ has full column rank. For any unitarily invariant norm $\|\cdot \|$, $\widehat M$ in~\eqref{sketchprob} satisfies
\begin{align*}
 \|A-C\widehat{M}C^{\top}\|  \leq  \left(1+ \frac{1}{\sigma_{\min}(XQ)^2}\right)\|A-CM^*C^{\top}\|.
\end{align*}
Furthermore, in the Frobenius norm
\begin{align*}
      \|A-C\widehat{M}C^{\top}\|_F \leq \left(1+ \frac{1}{\sigma_{\min}(XQ)^4}\right)^{\frac{1}{2}}\|A-CM^*C^{\top}\|_F.
\end{align*}
\end{theorem}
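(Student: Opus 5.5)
The plan is to rewrite both approximations in terms of projectors onto $\operatorname{span}(Q)$ and then compare them directly. Since $C=QR$ with $R$ invertible, the optimal approximation is $CM^*C^\top = QQ^\top A QQ^\top$ by \eqref{CURBA_full}. For the sketched one, \eqref{sketchprob} gives $(XC)^\dagger = R^{-1}(XQ)^\dagger$, because $XQ$ has full column rank. Hence
\[
C\widehat M C^\top = Q (XQ)^\dagger XAX^\top ((XQ)^\dagger)^\top Q^\top = PAP^\top,
\qquad P := Q(XQ)^\dagger X .
\]
The key structural fact is that $P$ is an oblique projector onto $\operatorname{span}(Q)$ that fixes $Q$. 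Indeed, $PQ = Q(XQ)^\dagger XQ = Q$, since $(XQ)^\dagger XQ = I_r$. Also $QQ^\top P = P$.

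Next set $E := A - QQ^\top AQQ^\top$, the optimal residual. Because $PQ=Q$, we get $P(QQ^\top AQQ^\top)P^\top = QQ^\top AQQ^\top$. Subtracting this from $PAP^\top$ gives the identity
\[
A - PAP^\top = E - PEP^\top .
\]

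For a general unitarily invariant norm, apply the triangle inequality together with $\|PEP^\top\|\le \|P\|_2^2\|E\|$. Since $X$ has orthonormal rows, $\|X\|_2=1$, and $Q$ has orthonormal columns. Therefore $\|P\|_2 \le \|(XQ)^\dagger\|_2 = 1/\sigma_{\min}(XQ)$, which yields the first bound.

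For the Frobenius norm, the plan is to get a Pythagorean splitting instead of the triangle inequality. We have $Q^\top E Q = Q^\top A Q - Q^\top A Q = 0$, so $QQ^\top E QQ^\top = 0$. Meanwhile $PEP^\top = QQ^\top (PEP^\top) QQ^\top$. Now decompose any matrix $Y$ into the four blocks $\Pi_1 Y \Pi_2$ with $\Pi_i\in\{QQ^\top, I-QQ^\top\}$; these blocks are mutually orthogonal in the Frobenius inner product. Under this decomposition, $E$ and $PEP^\top$ live in complementary blocks. Hence
\[
\|E-PEP^\top\|_F^2 = \|E\|_F^2 + \|PEP^\top\|_F^2 \le \bigl(1+\sigma_{\min}(XQ)^{-4}\bigr)\|E\|_F^2 ,
\]
which gives the second bound.

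The main point requiring care is the identity $A-PAP^\top = E-PEP^\top$. It rests on $PQ=Q$, and hence on the full column rank of $XQ$. The orthogonal-block argument also needs checking, in particular that $QQ^\top E QQ^\top$ vanishes. The rest is routine norm manipulation.
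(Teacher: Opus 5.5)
Your proposal is correct and follows essentially the same route as the paper: the same oblique projector $P=Q(XQ)^\dagger X$, the identity $A-PAP^\top=E-PEP^\top$ obtained from $PQ=Q$, the triangle inequality with $\|P\|_2\le 1/\sigma_{\min}(XQ)$, and a Pythagorean split in the Frobenius case. Your argument for the vanishing cross term (namely $QQ^\top EQQ^\top=0$ while $PEP^\top=QQ^\top PEP^\top QQ^\top$, so the two lie in orthogonal blocks) is a more explicit version of the paper's appeal to cyclicity of the trace and $\mathcal{P}_Q^2=\mathcal{P}_Q$.
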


\begin{proof} Since $XQ \in \mathbb{R}^{t \times r}$ has full column rank, $(XQ)^{\dagger}$ is a left inverse. Define projections $\mathcal{P}_{Q,X^{\top}}:=Q(XQ)^{\dagger}X$ and $\mathcal{P}_Q:=QQ^{\top}$ (which is an orthogonal projection), so that $CM^*C^\top = \mathcal{P}_QA\mathcal{P}_Q$ and $C\widehat{M}C^\top = \mathcal{P}_{Q,X^{\top}} A \mathcal{P}_{Q,X^{\top}}^\top$. One verifies:
\begin{itemize}
    \item $\mathcal{P}_{Q,X^{\top}}\mathcal{P}_Q = Q(XQ)^{\dagger}XQQ^{\top} = QQ^{\top} = \mathcal{P}_Q$, and so $\mathcal{P}_Q \mathcal{P}^{\top}_{Q,X^{\top}} = \mathcal{P}_Q$;
    \item $\mathcal{P}_Q\mathcal{P}_{Q,X^{\top}} = QQ^{\top}Q(XQ)^{\dagger}X = \mathcal{P}_{Q,X^{\top}} $, and so $\mathcal{P}^{\top}_{Q,X^{\top}}\mathcal{P}_Q = \mathcal{P}^{\top}_{Q,X^{\top}}$.
\end{itemize}
Therefore, for any unitarily invariant norm,
\begin{align*}
       \|A - C\widehat{M}C^{\top}\|
       =&\|A - \mathcal{P}_{Q,X^{\top}}A\mathcal{P}^{\top}_{Q,X^{\top}}\| \\
       =& \| (A - \mathcal{P}_QA\mathcal{P}_Q) + (\mathcal{P}_{Q,X^{\top}}\mathcal{P}_QA\mathcal{P}_Q \mathcal{P}^{\top}_{Q,X^{\top}} - \mathcal{P}_{Q,X^{\top}}A\mathcal{P}^{\top}_{Q,X^{\top}})\|
       \\
        \leq& \| (A - \mathcal{P}_QA\mathcal{P}_Q)\|+ \|\mathcal{P}_{Q,X^{\top}}(A - \mathcal{P}_QA\mathcal{P}_Q)\mathcal{P}_{Q,X^{\top}} \|
        \\
       \leq& (1+ \|\mathcal{P}_{Q,X^{\top}} \|_2^2) \| A - \mathcal{P}_QA\mathcal{P}_Q\|  \\
       =& (1+ \|(XQ)^{\dagger} \|_2^2)\|A-CM^*C^{\top}\| \\ =& \left(1+ \frac{1}{\sigma_{\min}(XQ)^2}\right)\|A-CM^*C^{\top}\|.
\end{align*}
For the improved bound in the Frobenius norm, using $\mathcal{P}_{Q}^{2}=\mathcal{P}_{Q}$ and the cyclicity of the trace,
\begin{align*}
  \|A-C\widehat{M}C^{\top}\|_{F}^{2}
  &=\|A-CM^{*}C^{\top}\|_{F}^{2}
   +\|\mathcal{P}_{Q,X^{\top}}(A-\mathcal{P}_{Q}A\mathcal{P}_{Q})
      \mathcal{P}_{Q,X^{\top}}\|_{F}^{2}\\
  &\quad+2\operatorname{tr}\!\bigl((A-\mathcal{P}_{Q}A\mathcal{P}_{Q})
     (\mathcal{P}_{Q}A\mathcal{P}_{Q}
      -\mathcal{P}_{Q,X^{\top}}A\mathcal{P}_{Q,X^{\top}}^{\top})\bigr)\\
  &=\|A-CM^{*}C^{\top}\|_{F}^{2}
   +\|\mathcal{P}_{Q,X^{\top}}(A-\mathcal{P}_{Q}A\mathcal{P}_{Q})
      \mathcal{P}_{Q,X^{\top}}\|_{F}^{2},
\end{align*}
where the cross-term vanishes by cyclicity of the trace. Hence, the result follows by
\begin{align*}
    \|A-C\widehat{M}C^{\top}\|_{F}^{2}
  &\le\Bigl(1+\|(XQ)^{\dagger}\|_{2}^{4}\Bigr)\|A-CM^{*}C^{\top}\|_{F}^{2} \\
  &=\Bigl(1+\tfrac{1}{\sigma_{\min}(XQ)^{4}}\Bigr)\|A-CM^{*}C^{\top}\|_{F}^{2}.
\end{align*}
\end{proof}

The bounds in \Cref{thm:general} depend on $\sigma_{\min}(XQ)$. These bounds are often overestimates, and may be expensive to determine efficiently. We therefore specialize to two concrete choices of $X$---Gaussian matrices and leverage score sampling---in \Cref{sec:gaussian,sec:lssampling}, where we obtain $n$-independent nuclear-norm bounds. 

\subsection{Random Gaussian matrix} \label{sec:gaussian}
Among oblivious embeddings, a Gaussian embedding is the most fully understood. We choose $G\in \Bbb{R}^{t\times n}$ $(t \leq n)$ with independent and identically distributed (i.i.d.) entries $G_{ij}\sim \mathcal{N}(0, \frac{1}{t})$, so that $\mathbb{E}[G^{\top}G]=I_n$ and $\operatorname{Var}(G^{\top}G)=\frac{1}{t}I_n+\frac{1}{t}J_n$, where $J_n$ is the all-ones matrix. Gaussian matrices have the nice property that they are invariant under orthonormal transformations, that is, given an orthonormal $U\in \Bbb{R}^{n\times r}$ with $n\geq r$, $GU\in\mathbb{R}^{t\times r}$ is also Gaussian with i.i.d. entries $\mathcal{N}(0, \frac{1}{t})$. This allows us to prove the following lemma which is central to our theory.

\begin{lemma} \label{lemma32}
Let $U \in \mathbb{R}^{n\times r}$ be orthonormal and $G \in \mathbb{R}^{t\times n}$ a Gaussian sketching matrix with i.i.d. entries $\mathcal{N}(0, \frac{1}{t})$, where $t> r$. Set 
\begin{equation}\label{eq:c1def}
    c_1(r,t):=  \left(1+\sqrt{\frac{r}{t}}\right)^2 \left(1+\sqrt{\frac{2}{r}}\right)^{1/2} \left(1+\frac{2}{t}\right)^{1/2}.
\end{equation}
Then for any symmetric $B\in \mathbb{R}^{n\times n}$ and $\Delta> c_1(r,t)\sqrt{r}$, 
\[
\mathbb{P}\left( \|U^{\top}G^{\top}GBG^{\top}GU\|_* < \Delta \|B\|_*\right) > 1-\frac{c_1(r,t)\sqrt{r}}{\Delta}.
\]
\end{lemma}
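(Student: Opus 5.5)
The plan is to bound the expectation of $\|U^{\top}G^{\top}GBG^{\top}GU\|_*$ by a multiple of $\|B\|_*$ and then apply Markov's inequality. Since $B$ is symmetric, write its eigendecomposition $B=\sum_{i=1}^n \lambda_i v_iv_i^{\top}$ with orthonormal $v_i$, so that $\|B\|_*=\sum_i|\lambda_i|$. Set $W:=G^{\top}G$. The triangle inequality for the nuclear norm gives
\[
\|U^{\top}WBWU\|_* \le \sum_i |\lambda_i|\,\|U^{\top}Wv_iv_i^{\top}WU\|_* = \sum_i|\lambda_i|\,\|U^{\top}Wv_i\|_2^2 ,
\]
because each summand is a rank-one positive semidefinite matrix, whose nuclear norm equals its trace. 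It therefore suffices to bound $\mathbb{E}\|U^{\top}Wv\|_2^2$ uniformly over unit vectors $v$.

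For a fixed unit $v$, we have $\mathbb{E}\|U^{\top}Wv\|_2^2=\operatorname{tr}\bigl(U^{\top}\,\mathbb{E}[Wvv^{\top}W]\,U\bigr)$, so the key step is the fourth-moment computation for the Wishart matrix. Write $G=t^{-1/2}Z$, where $Z$ has rows $z_k^{\top}$ with $z_k\sim\mathcal{N}(0,I_n)$ i.i.d. Then $W=t^{-1}\sum_k z_kz_k^{\top}$. Expanding the double sum, each of the $t(t-1)$ cross terms with $k\neq l$ contributes $vv^{\top}$ by independence. Each diagonal term contributes $\mathbb{E}[z z^{\top}vv^{\top}zz^{\top}]=2vv^{\top}+I_n$, by Isserlis' theorem. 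Hence
\[
\mathbb{E}[Wvv^{\top}W]=\Bigl(1+\tfrac1t\Bigr)vv^{\top}+\tfrac1t I_n,
\qquad
\mathbb{E}\|U^{\top}Wv\|_2^2=\Bigl(1+\tfrac1t\Bigr)\|U^{\top}v\|_2^2+\tfrac rt\le 1+\tfrac{r+1}{t}.
\]
By linearity of expectation, this yields $\mathbb{E}\|U^{\top}WBWU\|_*\le (1+\frac{r+1}{t})\|B\|_*$.

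It remains to compare this constant with $c_1(r,t)\sqrt r$ and to apply Markov's inequality. Since $r,t\ge1$, we have $(1+\sqrt{r/t})^2\ge 1+\frac rt+2\sqrt{r/t}\ge 1+\frac{r+1}{t}$, because $2\sqrt{r/t}\ge 2/\sqrt{t}\ge 1/t$. The remaining factors of $c_1$ are strictly greater than $1$ and $\sqrt r\ge1$, so $\mathbb{E}\|U^{\top}WBWU\|_* < c_1(r,t)\sqrt r\,\|B\|_*$ whenever $B\neq0$.

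Markov's inequality then gives
\[
\mathbb{P}\bigl(\|U^{\top}WBWU\|_*\ge\Delta\|B\|_*\bigr)\le \frac{\mathbb{E}\|U^{\top}WBWU\|_*}{\Delta\|B\|_*}<\frac{c_1(r,t)\sqrt r}{\Delta},
\]
which is the claimed bound. The hypothesis $\Delta>c_1\sqrt r$ only ensures that the probability bound is nontrivial. The degenerate case $B=0$ has to be handled separately: there the strict inequality in the event fails, so it must either be excluded or read with $\le$.

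The main obstacle is the Wishart fourth-moment identity, specifically getting the Gaussian fourth-moment term right. A cruder alternative, bounding $\|U^{\top}Wv\|_2\le\|GU\|_2\|Gv\|_2$ and then using Cauchy--Schwarz with the tail of $\|GU\|_2\le 1+\sqrt{r/t}$, would presumably reproduce the exact form of $c_1$. The direct computation above is simpler, and its constant is still dominated by $c_1\sqrt r$.
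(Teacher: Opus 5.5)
Your proof is correct, and it takes a genuinely different, and sharper, route than the paper. The paper first passes to the Frobenius norm, $\|U^{\top}G^{\top}GBG^{\top}GU\|_*\le\sqrt{r}\,\|GU\|_2^2\,\|GBG^{\top}\|_F$, and separates the two factors by Cauchy--Schwarz. It controls $\mathbb{E}\|GU\|_2^4$ with an external Wishart moment bound \cite[Lemma B.1]{tropp2023}, and controls $\mathbb{E}\|GBG^{\top}\|_F^2=\operatorname{tr}(B\,\mathbb{E}[G^{\top}GBG^{\top}G])$ via the identity $\mathbb{E}[G^{\top}GBG^{\top}G]=(1+\frac{1}{t})B+\frac{\operatorname{tr}(B)}{t}I_n$, which is exactly your Isserlis computation with $B=vv^{\top}$.

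You never leave the nuclear norm. Splitting $B$ into its eigencomponents makes every summand positive semidefinite, so its nuclear norm is a trace, hence linear, and its expectation can be computed exactly. This buys two things. First, the argument is self-contained: no operator-norm moment bound for $GU$ is needed. Second, you get $\mathbb{E}\|U^{\top}G^{\top}GBG^{\top}GU\|_*\le(1+\frac{r+1}{t})\|B\|_*$, which is free of the $\sqrt{r}$ that the paper loses in the norm conversion. Fed into the proof of \Cref{main_gaussian}, this would allow any $\Delta>1+\frac{r+1}{t}$, so the factor $1+c_2\sqrt{r}$ could be replaced by a constant independent of $r$. The same trace argument also works for leverage score sampling: diagonal sample pairs contribute at most $\frac{r}{t}$ and off-diagonal pairs at most $1$. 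That would sharpen \Cref{lem:nuclearLSS} and make its decoupling step unnecessary. The paper's route mainly buys modularity, isolating the embedding quality $\|GU\|_2$ from the interaction with $B$, at the price of the $\sqrt{r}$.

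Your side remarks are also right. For $B=0$ the event $\{0<0\}$ is empty, so the statement tacitly requires $B\neq 0$, which the paper does not mention. For $B\neq 0$, the slack between $1+\frac{r+1}{t}$ and $c_1(r,t)\sqrt{r}$ correctly delivers the strict inequality.
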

\begin{remark}[$c_{1}$ is order-1]
    Since $t\geq r\geq 1$, $c_1(r,t)\le 4\sqrt{1+\sqrt{2}}\sqrt{3}$. This confirms that $c_1$ is an order-1 constant. Increasing $t$ strictly decreases $c_1(r,t)$, exhibiting the benefit of oversampling.    
\end{remark}

\begin{proof}
We bound
\begin{align*}
    \mathbb{E} \| U^{\top}G^{\top}GBG^{\top}GU\|_* &\leq \sqrt{r}\mathbb{E}\left[ \| GU\|_2^2 \|GBG^{\top}\|_F\right]\\
    & \leq \sqrt{r} \sqrt{\E\|GU\|_2^4 \E\|GBG^\top\|_F^2} \\
    & = \sqrt{r} \left(1+\sqrt{\frac{r}{t}}\right)^2 \left(1+\sqrt{\frac{2}{r}}\right)^{1/2} \sqrt{\text{tr}(B \cdot \mathbb{E}\left[G^{\top}GBG^{\top}G\right])},
\end{align*} where we use $\norm{\cdot}_* \le \sqrt{r}\norm{\cdot}_F$ for the first inequality, Cauchy--Schwarz for the second, and \cite[Lemma B.1]{tropp2023} together with linearity of the trace for the final inequality.

A direct calculation gives $\mathbb{E}[G^\top GBG^\top G] = (1+1/t)B + (\operatorname{tr}(B)/t)I_n$, so
\begin{equation*}
    \operatorname{tr}(B\cdot \mathbb{E}[G^\top GBG^\top G]) = \left(1+\frac{1}{t}\right)\norm{B}_F^2 + \frac{1}{t}\operatorname{tr}(B)^2 \leq \left(1+\frac{2}{t}\right)\norm{B}_*^2.
\end{equation*}
Combining and applying Markov's inequality yields the stated result.
\end{proof}

\begin{lemma}{\cite[Fact 35]{ClarksonWoodruff}} \label{normaleq} Given $C\in \Bbb{R}^{n\times d}$ with $n>d$ and $D \in \Bbb{R}^{n\times n}$, let $W^*:=\underset{W\in \Bbb{R}^{d\times d}}{\text{argmin}}\|CWC^{\top} -D\|_F^2=C^\dagger D (C^{\top})^\dagger$.
Then $C^{\top}(CW^*C^{\top}-D)C=0$.
\end{lemma}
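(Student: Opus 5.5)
The claim $C^{\top}(CW^*C^{\top}-D)C=0$ is the first-order optimality condition of a convex quadratic least-squares problem. I will verify it in two ways: directly from the given closed form, and through the normal equations as a cross-check.

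\emph{Direct verification.} Since $C$ has full column rank $d$ (implicit in the closed form; if not, the pseudoinverse identities below still hold), we have $C^{\dagger}C=I_d$. Moreover $CC^{\dagger}=\mathcal{P}_C$ is the orthogonal projector onto $\range(C)$, and it is symmetric. Using $(C^{\top})^{\dagger}=(C^{\dagger})^{\top}$, this gives
\[
CW^*C^{\top}=CC^{\dagger}D(C^{\dagger})^{\top}C^{\top}=\mathcal{P}_C D\mathcal{P}_C .
\]
Next, $C^{\top}\mathcal{P}_C=(\mathcal{P}_C C)^{\top}=C^{\top}$, and likewise $\mathcal{P}_C C=C$. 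Therefore
\[
C^{\top}CW^*C^{\top}C=C^{\top}\mathcal{P}_C D\mathcal{P}_C C=C^{\top}DC,
\]
which is exactly the claimed identity. This argument needs only the Moore--Penrose property $CC^{\dagger}C=C$ and the symmetry of $CC^{\dagger}$, so it holds even without the full-rank assumption.

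\emph{Cross-check via optimality.} The objective $f(W)=\|CWC^{\top}-D\|_F^2$ has gradient $2C^{\top}(CWC^{\top}-D)C$. At a minimizer this gradient vanishes, which is the same statement. The closed form is indeed a minimizer: vectorizing gives $(C\otimes C)\operatorname{vec}(W)$, and $(C\otimes C)^{\dagger}=C^{\dagger}\otimes C^{\dagger}$, so the least-squares solution is $\operatorname{vec}(C^{\dagger}D(C^{\dagger})^{\top})$.

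None of these steps is a real obstacle. The only point needing care is the pseudoinverse bookkeeping, namely $(C^{\top})^{\dagger}=(C^{\dagger})^{\top}$ and $C^{\top}CC^{\dagger}=C^{\top}$, both of which follow from the Penrose conditions.
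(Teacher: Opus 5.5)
Your proof is correct. The paper gives no proof of this lemma; it only cites \cite[Fact 35]{ClarksonWoodruff}, so the comparison is really with the standard justification behind that citation.

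Your main argument verifies the closed form purely algebraically. You write $CW^*C^{\top}=CC^{\dagger}D\,(CC^{\dagger})^{\top}$ and use only the Penrose identities $CC^{\dagger}C=C$ and $(CC^{\dagger})^{\top}=CC^{\dagger}$, which gives $C^{\top}CW^*C^{\top}C=C^{\top}DC$ directly. This is self-contained and, as you observe, does not need $C$ to have full column rank. Your opening remark $C^{\dagger}C=I_d$ is never used and can be dropped.

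Your second argument is the usual normal-equations justification: the gradient $2C^{\top}(CWC^{\top}-D)C$ vanishes at a minimizer, and the $\operatorname{vec}$/Kronecker reduction confirms that $C^{\dagger}D(C^{\top})^{\dagger}$ is a minimizer. Its advantage is that it shows \emph{every} minimizer satisfies the identity, not just the pseudoinverse one. That matters only when $C$ is rank deficient and the argmin is not unique.

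In the paper's application, $C$ is replaced by $GU$ (or $SU$) with full column rank, so the minimizer is unique and the two routes give the same conclusion.
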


Using the two lemmas above (\Cref{lemma32,normaleq}), we prove the main result for Gaussian embeddings (\Cref{main_gaussian}).

\begin{theorem}\label{main_gaussian} Let $A\in \Bbb{R}^{n\times n}$ be a real symmetric matrix, $C\in \Bbb{R}^{n\times r}$ a column subset with $n\ge r$, $t$ a scalar multiple of $r$ (e.g., $t =2r$), and $G \in \Bbb{R}^{t\times n}$ a Gaussian $(\epsilon, \delta)$-oblivious subspace embedding. Define 
\[ M^*:=\underset{M\in \Bbb{R}^{r\times r}}{\text{argmin}}\| A-CMC^{\top}\|_F^2, \quad \quad \widehat{M}:=\underset{M\in \Bbb{R}^{r\times r}}{\text{argmin}}\| G(A-CMC^{\top})G^{\top}\|_F^2.
\]
Then for all $\Delta> c_1(r,t)\sqrt{r}$,
\[
\| A-C\widehat{M}C^{\top}\|_* \leq  (1+c_2(r,t,\epsilon)\sqrt{r}) \|A-CM^*C^{\top}\|_*,
\]
with probability at least $1-(c_1(r,t)\sqrt{r}/\Delta + \delta)$, where $c_1(r,t)$ is as in \Cref{lemma32}, and $c_2(r,t,\epsilon) :=\Delta/((1-\epsilon)^2\sqrt{r})$.
\end{theorem}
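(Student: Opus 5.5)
Let $U$ be the orthonormal factor of $C$ ($U=Q$ from $C=QR$), and write $B:=A-CM^*C^{\top}=A-\mathcal{P}_UA\mathcal{P}_U$, which is symmetric. The plan is to bound the excess $C(\widehat M-M^*)C^{\top}$ by $B$ through two ingredients. One is the normal equations of the sketched problem (\Cref{normaleq} applied with $GC$ and $GAG^{\top}$). The other is the subspace-embedding lower bound on $\sigma_{\min}(GU)$. \Cref{lemma32} will then bound the remaining random quantity.

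\textbf{Step 1 (exact expression for the error).} By \Cref{normaleq}, applied to the sketched problem with $GC$ in place of $C$ and $D=GAG^{\top}$,
\[
C^{\top}G^{\top}G(C\widehat M C^{\top}-A)G^{\top}GC=0 .
\]
Substituting $A=B+CM^*C^{\top}$ gives
\[
C^{\top}G^{\top}GC\,(\widehat M-M^*)\,C^{\top}G^{\top}GC=C^{\top}G^{\top}GBG^{\top}GC .
\]
Now write $C=UR$ and cancel the invertible $R$ and $R^{\top}$. With $K:=U^{\top}G^{\top}GU$ and $\Delta M:=R(\widehat M-M^*)R^{\top}$, this becomes
\[
K\,\Delta M\,K=U^{\top}G^{\top}GBG^{\top}GU .
\]
Since $C(\widehat M-M^*)C^{\top}=U\Delta M U^{\top}$, we obtain
\[
A-C\widehat MC^{\top}=B-U K^{-1}\bigl(U^{\top}G^{\top}GBG^{\top}GU\bigr)K^{-1}U^{\top}.
\]

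\textbf{Step 2 (norm bound).} Apply the triangle inequality and unitary invariance ($U$ has orthonormal columns) to get
\[
\|A-C\widehat MC^{\top}\|_*\le \|B\|_*+\|K^{-1}\|_2^2\,\|U^{\top}G^{\top}GBG^{\top}GU\|_* .
\]
Because $G$ is a $(1\pm\epsilon)$ subspace embedding for $\operatorname{span}(U)$ with probability at least $1-\delta$, \Cref{boundsingularvalue1}(ii) gives $\lambda_{\min}(K)\ge 1-\epsilon$, hence $\|K^{-1}\|_2\le 1/(1-\epsilon)$. This embedding event also guarantees that $GC$ has full column rank, so $\widehat M$ is well defined.

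\textbf{Step 3 (probability).} The matrix $B$ is fixed, symmetric, and independent of $G$, so \Cref{lemma32} applies: with probability greater than $1-c_1(r,t)\sqrt r/\Delta$,
\[
\|U^{\top}G^{\top}GBG^{\top}GU\|_*<\Delta\|B\|_* .
\]
A union bound over the failure of this event and of the embedding event gives joint success probability at least $1-(c_1\sqrt r/\Delta+\delta)$. On that event,
\[
\|A-C\widehat MC^{\top}\|_*\le\bigl(1+\Delta/(1-\epsilon)^2\bigr)\|B\|_*=(1+c_2\sqrt r)\|B\|_*,
\]
which is the claimed bound.

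The main obstacle is Step 1: rewriting the sketched normal equation so that only $B$ appears on the right-hand side, and checking that the $M^*$ contribution cancels exactly without needing the property $U^{\top}BU=0$. Everything after that is a direct combination of \Cref{lemma32} and \Cref{boundsingularvalue1}.
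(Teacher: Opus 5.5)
Your proposal is correct and follows essentially the same route as the paper: the sketched normal equations (\Cref{normaleq}) give $K\,\Delta M\,K = U^{\top}G^{\top}GBG^{\top}GU$ with $K=U^{\top}G^{\top}GU$. From there, \Cref{lemma32}, the bound $\|K^{-1}\|_2\le(1-\epsilon)^{-1}$ from \Cref{boundsingularvalue1}, the triangle inequality, and a union bound finish the argument exactly as in the paper. The only difference is cosmetic: you reparametrize through the QR factor, $\Delta M = R(\widehat M-M^*)R^{\top}$, and apply the normal equations to the $M$-problem, whereas the paper reparametrizes through the SVD, $W=\Sigma V^{\top}MV\Sigma$, and applies them to the $W$-problem.
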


\begin{proof}
Let $C=U\Sigma V^{\top}$ be the SVD of $C$, with $U\in \Bbb{R}^{n\times r}$ orthonormal. Setting $W = \Sigma V^\top MV\Sigma \iff M = V\Sigma^{-1}W\Sigma^{-1}V^\top$ shows
\begin{equation*}
    M^* \iff W^* := \argmin\limits_{W} \norm{A-UWU^\top}_F^2, 
\end{equation*} and
\begin{equation*}
    \widehat{M} \iff \widehat{W}:= \argmin\limits_{W} \norm{G(A-UWU^\top)G^\top}_F^2.
\end{equation*}

By \Cref{normaleq}, $U^{\top}G^{\top}(GAG^{\top}-GU\widehat{W}U^{\top}G^{\top})GU=0$. Substituting $B:=A-UW^*U^{\top}$ in \Cref{lemma32}, with probability at least $1-c_1(r,t)\sqrt{r}/\Delta$,
\begin{align*}
    \| U^{\top}G^{\top}GU(\widehat{W}-W^*)U^{\top}G^{\top}GU\|_* &= \| U^{\top}G^{\top}G(A-UW^*U^{\top})G^{\top}GU \|_* \\ &< \Delta\| A-UW^*U^{\top} \|_*.
\end{align*}

Let $K = U^\top G^\top GU$. Using \Cref{boundsingularvalue1} with $G$ an $\epsilon$-subspace embedding (with probability $\geq 1-\delta$), $\norm{K-I_r}_2 \leq \epsilon<1$, so $K$ is invertible with $\norm{K^{-1}}_2 \leq (1-\epsilon)^{-1}$. Therefore
\begin{align*}
    \|\widehat{W}-W^*\|_* &= \norm{K^{-1}K(\widehat{W}-W^*)KK^{-1}}_* \leq \norm{K^{-1}}_2^2 \norm{K(\widehat{W}-W^*)K}_* \\
    &\leq (1-\epsilon)^{-2}\norm{U^\top G^\top GU(\widehat{W}-W^*)U^\top G^\top GU}_* \\ &< \frac{\Delta}{(1-\epsilon)^2} \norm{A - UW^*U^\top}_*
\end{align*}

Finally,
\begin{align*}
    \| A-U\widehat{W}U^{\top}\|_* &\leq \| A-UW^*U^{\top}\|_* + \| \widehat{W}-W^*\|_* \\
    &\leq  \left( 1+\frac{\Delta}{(1-\epsilon)^2} \right)\|A-UW^*U^{\top}\|_*,
\end{align*}
and a union bound over the two failure events completes the proof.
\end{proof}

While Gaussian sketching provides clean theory, the dense matrix $G$ makes computing $GAG^\top$ expensive, requiring $\mathcal{O}(n^2t)$ operations and access to all $n^2$ entry evaluations of $A$. As a result, Gaussian sketching is often less favorable in practice. In contrast, leverage score sampling, studied next, avoids these computational burdens.

\subsection{Leverage score sampling} \label{sec:lssampling}
The column statistical leverage score quantifies how important each row of $C$ is in approximating $C$. Concretely, let $C = QR$ be a thin QR factorization, so that the columns of $Q \in \R^{n\times r}$ are orthonormal. The $i$th leverage score is $\ell_i := \norm{Q_{i,*}}_2^2$, the squared norm of the $i$th row of $Q$; it is independent of the choice of $Q$ and satisfies $\sum_{i = 1}^n \ell_i = r$. Leverage score sampling draws $t$ row indices i.i.d. with probabilities $p_i:= \ell_i/r$ and assembles $S \in \R^{t\times n}$ whose $m$th row is $e_{j_m}^\top/\sqrt{tp_{j_m}}$, where $j_m$ is the index drawn on the $m$th trial; this scaling ensures $\E[S^\top S] = I_n$. Here, $e_j$ is the $j$th canonical basis vector of $\R^n$. 
Since $S$ is sparse and simply subsamples and rescales rows and columns, forming $SAS^\top$ is efficient in the entry access model, requiring only $t^2$ entry evaluations of $A$.

We now present lemmas leading to the main LSS theorem (\Cref{main_leverage}): (i) a fourth-moment bound on $\norm{SU}_2$ (\Cref{concentration_bound}), (ii) a bound for the product of two independent sketches (\Cref{diffterms}), and (iii) the main nuclear-norm bound (\Cref{lem:nuclearLSS}), obtained by a decoupling argument~\cite[Thm.~6.1.1]{Vershynin_2018} that requires $t = \bigO(r\log r)$ samples.

\begin{lemma} \label{concentration_bound}
Let $U\in \mathbb{R}^{n\times r}$ be orthonormal and $S\in \mathbb{R}^{s\times n}$ the row leverage score sampling matrix of $U$ with $s< n$. Then $\mathbb{E}\|SU\|_2^4 \leq c_3(r,s)$, where $c_3(r,s)$ equals
\begin{equation*}
    \left( \sqrt{2e\frac{r-1}{s}\log(2r)}+4e\frac{r+1}{s}\log(2r) \right)^2 + 2 \left( \sqrt{2\frac{r-1}{s}\log(2r)}+\frac{1}{3}\frac{r+1}{s}\log(2r) \right) +1.
\end{equation*} Taking $s = \bigO(r\log r)$ makes $c_3(r,s)$ an $\bigO(1)$ constant. For example, $s = 2(r+1)\log (2r)$ gives $c_3(r,s) \leq 54$.
\end{lemma}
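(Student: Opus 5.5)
Since $\|SU\|_2^2 = \|U^\top S^\top S U\|_2$, I would write $U^\top S^\top SU$ as a sum of independent random matrices and control its deviation from the identity using the matrix Bernstein inequality in expectation form (Tropp's expected-norm version). Let $u_i^\top$ denote the $i$th row of $U$, so $\ell_i=\|u_i\|_2^2$ and $p_i=\ell_i/r$. Then
\[
U^\top S^\top S U=\sum_{m=1}^s \frac{1}{s p_{j_m}} u_{j_m}u_{j_m}^\top=\sum_{m=1}^s Y_m,
\]
and the $Y_m$ are i.i.d. with $\E Y_m = \frac1s I_r$. Set $Z=\sum_m X_m$ with $X_m := Y_m-\frac1s I_r$. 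These summands are centered and independent.

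The estimates needed for Bernstein are straightforward. Each $Y_m$ is rank one with norm $\ell_j/(s p_j) = r/s$, so $\|X_m\|_2\le r/s$; the sharper bound $(r-1)/s$ or $(r+1)/s$ matching the constants follows from $0\preceq Y_m\preceq (r/s)I$ shifted by $\frac1s I$. For the variance, $\E X_m^2=\E Y_m^2-\frac1{s^2}I$. Since $Y_m^2=\frac{r}{s}Y_m$, this gives $\E X_m^2 = \frac{r-1}{s^2} I$, hence $v:=\|\sum_m\E X_m^2\|_2 = (r-1)/s$.

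Next I need moments of $\|Z\|_2$. Expand
\[
\|SU\|_2^4=\|I+Z\|_2^2\le (1+\|Z\|_2)^2 = 1+2\|Z\|_2+\|Z\|_2^2,
\]
so $\E\|SU\|_2^4\le 1+2\E\|Z\|_2+\E\|Z\|_2^2$. The first moment comes directly from the expected matrix Bernstein bound $\E\|Z\|_2\le\sqrt{2v\log(2r)}+\frac13 L\log(2r)$, which reproduces the second bracket of $c_3$. For the second moment I would use the $L^p$ version of matrix Bernstein with $p=2$, i.e. Chen--Gittens--Tropp, giving
\[
(\E\|Z\|_2^2)^{1/2}\le \sqrt{2e\,v\log(2r)}+4e\,L\log(2r).
\]
Squaring this yields the first term of $c_3$. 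Combining the pieces gives the stated $c_3(r,s)$.

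The main obstacle is getting exactly the right second-moment Bernstein inequality with the constants $2e$ and $4e$. Here I would cite Tropp's ``expected norm'' result for the Hermitian case, in the form $(\E\|Z\|^2)^{1/2}\le\sqrt{2e\,v\log(2r)}+4e\,(\E\max_m\|X_m\|^2)^{1/2}\log(2r)$, and bound the max by $L=(r+1)/s$.

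Finally, for the numerical claim, take $s=2(r+1)\log(2r)$. Then $\frac{(r\pm1)\log(2r)}{s}\le\frac12$, and substituting gives
\[
c_3\le\bigl(\sqrt{e}+2e\bigr)^2+2\bigl(1+\tfrac16\bigr)+1\approx 47.7+3.3\le 54.
\]
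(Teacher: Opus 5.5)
Your proposal is correct and follows the paper's proof essentially step for step: the same centered rank-one decomposition of $Z=U^\top S^\top SU-I_r$ with $v=(r-1)/s$ and $L\le (r+1)/s$, the expansion $\|SU\|_2^4\le 1+2\|Z\|_2+\|Z\|_2^2$, expected matrix Bernstein for $\E\|Z\|_2$, and the Chen--Gittens--Tropp moment inequality (their Thm.~A.1, which is what supplies the constants $2e$ and $4e$) for $\E\|Z\|_2^2$. The only slip is arithmetic in the final check: $(\sqrt{e}+2e)^2\approx 50.2$, not $47.7$, but the total is still about $50.2+2.33+1\approx 53.5\le 54$, so the stated bound survives.
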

\begin{proof}
 Write $Z := U^{\top}S^{\top}SU-I_r$, so $\| SU\|_2^4 \leq (\|Z\|_2+1)^2 = \|Z\|_2^2 + 2\|Z\|_2+1$. Define $Z_m := (sp_{i(m)})^{-1} U_{i(m), *}^\top U_{i(m), *} - s^{-1}I_r$, so $Z = \sum_{m=1}^{s}Z_m$. One computes $\mathbb{E}[Z_m] = 0$, $L:=\max_i\norm{Z_i}_2 \leq (r+1)/s$, and variance $\operatorname{Var}(Z) = (r-1)/s$. Matrix Bernstein's inequality \cite[Thm. 6.1.1]{tropp2015matrixconcentration} gives $\mathbb{E}\norm{Z}_2 \leq \sqrt{2\operatorname{Var}(Z)\log(2r)} +\frac{1}{3}L\log(2r)$ and \cite[Thm. A.1]{cgt12} gives the following bound 
 \begin{equation*}
     \mathbb{E}\norm{Z}_2^2 \leq \left( \sqrt{2e\nu(Z)\log(2r)}+4eL\log(2r) \right)^2.
 \end{equation*} Combining these bounds yields $c_3(r,s)$ as stated.
\end{proof}

\begin{lemma}\label{diffterms}
Let $U \in \mathbb{R}^{n\times r}$ be orthonormal, $S_1 \in \R^{s_1 \times n}$ and $S_2 \in \R^{s_2 \times n}$ be two independent leverage score sampling matrices of the rows of $U$, and let $B \in \R^{n\times n}$ be real symmetric. Then $\E\norm{U^\top S_1^\top S_1 BS_2^\top S_2 U}_F \le (c_3(r,s_1)c_3(r,s_2))^{1/4}\norm{B}_F$.
\end{lemma}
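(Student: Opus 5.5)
The plan is to peel off the two sketched factors $S_1U$ and $S_2U$ in spectral norm, leaving a middle term whose second moment can be computed exactly using $\E[S^\top S]=I_n$. Then I would combine the pieces with Cauchy--Schwarz, exploiting the independence of $S_1$ and $S_2$.

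\emph{Step 1: deterministic splitting.} By submultiplicativity, $\|PYQ\|_F\le\|P\|_2\|Y\|_F\|Q\|_2$, we have
\[
\norm{U^\top S_1^\top S_1 B S_2^\top S_2 U}_F \le \norm{S_1U}_2\,\norm{S_1BS_2^\top}_F\,\norm{S_2U}_2 =: a\, b\, c .
\]
Here $a$ depends only on $S_1$, $c$ only on $S_2$, and $b$ on both.

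\emph{Step 2: second moment of the middle factor.} Write $b^2=\operatorname{tr}(S_2BS_1^\top S_1BS_2^\top)=\operatorname{tr}(B\,S_1^\top S_1\,B\,S_2^\top S_2)$. This is bilinear in the independent matrices $S_1^\top S_1$ and $S_2^\top S_2$. Taking expectations and using $\E[S_i^\top S_i]=I_n$ (the LSS scaling noted before \Cref{concentration_bound}), we get
\[
\E b^2=\operatorname{tr}(B^2)=\norm{B}_F^2 ,
\]
since $B$ is symmetric. Strictly, one first conditions on $S_1$ and averages over $S_2$, then averages over $S_1$; both steps use linearity of the trace.

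\emph{Step 3: combining.} By Cauchy--Schwarz,
\[
\E[abc]\le (\E[a^2c^2])^{1/2}(\E b^2)^{1/2}.
\]
Since $a$ and $c$ are independent, $\E[a^2c^2]=\E a^2\,\E c^2$. By Jensen and \Cref{concentration_bound}, $\E a^2\le(\E\|S_1U\|_2^4)^{1/2}\le c_3(r,s_1)^{1/2}$, and likewise $\E c^2\le c_3(r,s_2)^{1/2}$. Putting these together gives
\[
\E\norm{U^\top S_1^\top S_1 B S_2^\top S_2 U}_F\le (c_3(r,s_1)c_3(r,s_2))^{1/4}\norm{B}_F ,
\]
as claimed.

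\emph{Main obstacle.} The delicate point is the dependence structure: $b$ shares randomness with both $a$ and $c$. A naive three-way Hölder split would require a fourth moment of $b$, which is not available cheaply. The Cauchy--Schwarz grouping above isolates $b$ and needs only its second moment, which is exactly computable. It also uses independence only between $a$ and $c$, which does hold. This is where the independence of the two sketches is essential, and it is precisely why the lemma is stated for two independent sketches, to be used later in the decoupling argument.
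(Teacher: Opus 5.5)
Your proof is correct and follows the paper's argument. Both proofs use the same bound by $\|S_1U\|_2\,\|S_1BS_2^\top\|_F\,\|S_2U\|_2$, the same exact computation $\E\|S_1BS_2^\top\|_F^2=\|B\|_F^2$ from independence, and \Cref{concentration_bound} for the outer factors; the paper merely combines these in one step with the generalized H\"older inequality with exponents $(4,2,4)$, where you use Cauchy--Schwarz, independence and Jensen. That H\"older split needs only the second moment of $b$ and no independence between $a$ and $c$, so your ``main obstacle'' remark misplaces the role of independence: it is essential in Step~2 (for $S_1=S_2$ one has in general $\E\|SBS^\top\|_F^2\neq\|B\|_F^2$), not in Step~3.
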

\begin{proof} 
By the generalized H\"older inequality and \Cref{concentration_bound},
\begin{align*}
    \mathbb{E}\|U^{\top}S_1^{\top}S_1BS_2^{\top}S_2U\|_F &\leq \left(\mathbb{E}\|S_1U\|_2^4\right)^{1/4}\sqrt{ \mathbb{E}\|S_1BS_2^{\top}\|_F^2}\left(\mathbb{E}\|S_2U\|_2^4\right)^{1/4} \\
    &\leq (c_3(r,s_1)c_3(r,s_2))^{1/4} \sqrt{ \mathbb{E}\|S_1BS_2^{\top}\|_F^2}.
\end{align*}
Since $S_1$ and $S_2$ are independent, $\mathbb{E}\|S_1B S_2^\top\|_F^2 = \norm{B}_F^2$ by direct calculation, and the result follows.
\end{proof}

\begin{lemma} \label{lem:nuclearLSS} Let $U \in \R^{n\times r}$ be orthonormal and $S \in \R^{t\times n}$ a LSS matrix of $U$ with $t\geq C_0 r\log(2r)$ for a sufficiently large absolute constant $C_0$. Set $q = \lfloor t/2\rfloor,  q' = \lceil t/2\rceil = t-q$, and 
\begin{equation*}
    c_4(r,t):= \frac{\sqrt{r}}{t} +\frac{t-1}{t} (c_3(r,q)c_3(r,q'))^{1/4} \leq \frac{\sqrt{r}}{t}+\sqrt{c_3(r,\lfloor t/2\rfloor)},
\end{equation*} with $c_3$ as in \Cref{concentration_bound}. Then for any symmetric $B \in \R^{n\times n}$ and $\Delta > c_4(r,t)\sqrt{r}$,
\begin{equation*}
    \mathbb{P}(\norm{U^\top S^\top SBS^\top SU}_* < \Delta \norm{B}_*) >1-\frac{c_4(r,t)\sqrt{r}}{\Delta}.
\end{equation*}
\end{lemma}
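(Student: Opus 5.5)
The plan follows the template of \Cref{lemma32}: bound the expectation $\E\norm{U^\top S^\top SBS^\top SU}_*$ by $c_4(r,t)\sqrt{r}\,\norm{B}_*$ and then apply Markov's inequality. The obstacle compared to the Gaussian case is that $S^\top S$ is a sum of $t$ i.i.d. rank-one terms with heavy-tailed scaling. So $\E\norm{SBS^\top}_F^2$ cannot be computed as cleanly, and I will instead decouple the diagonal and off-diagonal parts of the double sum.

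\textbf{Step 1: split into diagonal and off-diagonal terms.} Write $S^\top S=\sum_{m=1}^t P_m$ with $P_m := (tp_{j_m})^{-1}e_{j_m}e_{j_m}^\top$, and expand
\[
U^\top S^\top SBS^\top SU=\sum_{m}U^\top P_mBP_mU+\sum_{m\ne m'}U^\top P_mBP_{m'}U .
\]
\textbf{Step 2: the diagonal part.} Each diagonal summand equals
\[
(tp_j)^{-2}B_{jj}\,U_{j,*}^\top U_{j,*}, \qquad j=j_m .
\]
This is a rank-one matrix with nuclear norm $(tp_j)^{-2}|B_{jj}|\ell_j$. Since $p_j=\ell_j/r$, its expectation is
\[
\sum_j p_j (tp_j)^{-2}|B_{jj}|\ell_j = \frac{r}{t^2}\sum_j |B_{jj}| \le \frac{r}{t^2}\norm{B}_*,
\]
using $\sum_j|B_{jj}|\le\norm{B}_*$. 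Summing over the $t$ draws gives $(r/t)\norm{B}_*=\frac{\sqrt r}{t}\cdot\sqrt r\,\norm{B}_*$, which is exactly the first term of $c_4\sqrt r$.

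\textbf{Step 3: the off-diagonal part via decoupling.} Here I use the averaging form of decoupling (as in \cite[Thm.~6.1.1]{Vershynin_2018}). Average over all partitions $(T,T^c)$ of $\{1,\dots,t\}$ with $|T|=q$ and $|T^c|=q'$. Each ordered pair $m\ne m'$ is split across $T\times T^c$ with probability $qq'/(t(t-1))$. Hence
\[
\sum_{m\ne m'}U^\top P_mBP_{m'}U=\frac{t(t-1)}{qq'}\,\E_T\Big[U^\top S_T^\top S_TBS_{T^c}^\top S_{T^c}U\Big],
\]
where $S_T^\top S_T=\sum_{m\in T}P_m$. Rescale $S_T^\top S_T=\frac{q}{t}\,S_1^\top S_1$ and $S_{T^c}^\top S_{T^c}=\frac{q'}{t}\,S_2^\top S_2$, where $S_1$ and $S_2$ are independent LSS matrices of sizes $q$ and $q'$. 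The prefactor then becomes $\frac{t(t-1)}{qq'}\cdot\frac{qq'}{t^2}=\frac{t-1}{t}$. Now apply the triangle inequality, $\norm{\cdot}_*\le\sqrt r\norm{\cdot}_F$, and \Cref{diffterms}. This bounds the expected nuclear norm by
\[
\frac{t-1}{t}\sqrt r\,(c_3(r,q)c_3(r,q'))^{1/4}\norm{B}_F \le \frac{t-1}{t}\sqrt r\,(c_3(r,q)c_3(r,q'))^{1/4}\norm{B}_*.
\]
The symmetry of $B$ is needed here so that \Cref{diffterms} applies.

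\textbf{Step 4: conclude.} Adding the bounds from Steps 2 and 3 gives $\E\norm{\cdot}_*\le c_4\sqrt r\,\norm{B}_*$, and Markov's inequality yields the stated probability. The inequality on the right-hand side of the definition of $c_4$ follows because $c_3(r,\cdot)$ is decreasing, so $c_3(r,q')\le c_3(r,q)$. The hypothesis $t\ge C_0 r\log(2r)$ only ensures that $c_3(r,\lfloor t/2\rfloor)$ is $\bigO(1)$.

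\textbf{Main obstacle.} I expect the hardest step to be Step 3. The decoupling identity must be set up carefully so that the subsampled halves are genuinely independent LSS matrices of sizes $q$ and $q'$ with the right normalization, and so that the combinatorial constant comes out exactly as $(t-1)/t$. The diagonal term is routine, provided one remembers that it does not vanish and is handled separately.
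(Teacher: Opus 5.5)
Your proposal is correct and follows essentially the same argument as the paper. Both split the sum into diagonal and off-diagonal parts, with the diagonal part bounded by $\frac{r}{t}\norm{B}_*$. Both decouple the off-diagonal part through a uniformly random balanced partition of sizes $q,q'$, where each ordered pair is split with probability $qq'/(t(t-1))$, giving the factor $(t-1)/t$; then comes Jensen/triangle inequality, $\norm{\cdot}_*\le\sqrt{r}\norm{\cdot}_F$, \Cref{diffterms}, and Markov's inequality.
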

\begin{proof}
    Write $S^\top S = \frac{1}{t}\sum\limits_{m = 1}^t Y_m$ with $Y_m = e_{i(m)}e_{i(m)}^\top /p_{i(m)}$ where $i(m)\in \{1,...,n\}$ refers to the index chosen on the $m$th draw. Now split $U^\top S^\top SBS^\top SU = \Theta_{diag}+ \Theta_{off}$ into the $m = m'$ and $m \neq m'$ contributions, $\Theta_{diag} = \frac{1}{t^2}\sum\limits_{m} U^\top Y_m B Y_m U$ and $\Theta_{off} = \frac{1}{t^2}\sum\limits_{m \neq m'} U^\top Y_m B Y_{m'}U$.

    For the diagonal part, conditioning on $i(m) =i$ gives $U^\top Y_mBY_{m}U = \frac{B_{ii}}{p_i^2} U_{i,*}^\top U_{i,*}$, a rank-$1$ matrix of nuclear norm $\norm{U_{i,*}}_2^2 = rp_i$; hence
    \begin{equation*}
        \E\norm{\Theta_{diag}}_* \leq \frac{1}{t} \sum_i p_i \frac{B_{ii}}{p_i^2} (rp_i) = \frac{r}{t} \sum_i |B_{ii}| \leq \frac{r}{t}\norm{B}_*.
    \end{equation*}

    For the off-diagonal part, draw $\Omega \subseteq \{1,...,t\}$ uniformly among the subsets of size $q$, independent of $i(1),...,i(t)$, and set $\Omega' = \{1,...,t\} \setminus\Omega$. For every ordered pair $m\neq m'$, one has $\mathbb{P}(m \in \Omega, m' \in \Omega') = \frac{q}{t}\frac{q'}{t-1}$, so taking expectation over $\Omega$ recovers the off-diagonal sum exactly,
    \begin{equation*}
         \E_\Omega[U^\top S_\Omega^\top S_\Omega B S_{\Omega'}^\top S_{\Omega'}U] = \sum\limits_{m\neq m'}\frac{q q'}{t(t-1)}U^\top \frac{1}{q} Y_m B \frac{1}{q'} Y_{m'}U = \frac{t}{t-1}\Theta_{off}, 
    \end{equation*} where $S_\Omega, S_{\Omega'}$ are leverage score sampling matrices with $q,q'$ rows satisfying $S_{\Omega}^\top S_{\Omega} = \frac{1}{q}\sum\limits_{m\in \Omega}Y_m$ and $S_{\Omega'}^\top S_{\Omega'} = \frac{1}{q'}\sum\limits_{m'\in \Omega'}Y_{m'}$, respectively. Conditional on $\Omega$, they are independent, since $\{i(m): m\in \Omega\}$ and $\{i(m): m \in \Omega'\}$ are disjoint subfamilies of independent draws. Using Jensen's inequality, followed by a norm conversion and \Cref{diffterms} with sizes $q,q'$, gives
    \begin{align*}
        \E\norm{\Theta_{off}}_* &\leq \frac{t-1}{t} \E\E_\Omega \norm{U^\top S_\Omega^\top S_\Omega B S_{\Omega'}^\top S_{\Omega'}U}_* \\
        &\leq \frac{t-1}{t}\sqrt{r} \E_\Omega \E \norm{U^\top S_\Omega^\top S_\Omega B S_{\Omega'}^\top S_{\Omega'}U}_F \\
        &\leq \frac{t-1}{t}(c_3(r,q)c_3(r,q'))^{1/4}\norm{B}_F.
    \end{align*} Adding the diagonal and off-diagonal parts, and using $\norm{B}_F \leq \norm{B}_*$, the fact that $c_3$ is nonincreasing, and $q\leq q'$ gives
    \begin{equation*}
        \E\norm{U^\top S^\top SBS^\top SU}_* \leq \left(\frac{r}{t} + \frac{(t-1)\sqrt{r}}{t} (c_3(r,q)c_3(r,q'))^{1/4}\right)\norm{B}_*.
    \end{equation*} Finally, taking Markov's inequality gives the result.
\end{proof}

\begin{theorem}\label{main_leverage}
    Let $A \in \R^{n\times n}$ be a real symmetric matrix, $C \in \R^{n\times r}$ a column subset with $n\geq r$ and thin SVD $C = U\Sigma V^\top$, and $S \in \R^{t\times n}$ a leverage score sampling matrix of $U$ with $t = \bigO(r\log r)$ that is an $\epsilon$-subspace embedding with probability at least $1-\delta$. Define
    \begin{equation*}
        M^*:=\underset{M\in \mathbb{R}^{r\times r}}\argmin\| A-CMC^{\top}\|_F, \qquad \widehat{M}:=\underset{M\in \mathbb{R}^{r\times r}}\argmin\| S(A-CMC^{\top})S^{\top}\|_F.
    \end{equation*}
    Then for all $\Delta > c_4(r,t) \sqrt{r}$,
    \begin{equation*}
        \norm{A - C\widehat{M}C^\top}_* \leq (1+c_5(r,t,\epsilon)\sqrt{r})\norm{A-CM^*C^\top}_*,
    \end{equation*} with probability at least $1-(c_4(r,t)\sqrt{r}/\Delta +\delta)$, where $c_4(r,t)$ is as in \Cref{lem:nuclearLSS}, and $c_5(r,t,\epsilon) = \Delta/((1-\epsilon)^2\sqrt{r})$, exactly as $c_2$ in \Cref{main_gaussian}.
\end{theorem}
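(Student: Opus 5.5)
The proof will mirror that of \Cref{main_gaussian}, with \Cref{lem:nuclearLSS} taking the place of \Cref{lemma32}. First we reparametrize using the thin SVD $C=U\Sigma V^\top$. Writing $W=\Sigma V^\top M V\Sigma$, the two problems defining $M^*$ and $\widehat M$ become
\[
W^*=\argmin_W\norm{A-UWU^\top}_F, \qquad \widehat W=\argmin_W\norm{S(A-UWU^\top)S^\top}_F,
\]
with $CM^*C^\top=UW^*U^\top$ and $C\widehat MC^\top=U\widehat WU^\top$. Note that $SC$ has full column rank exactly when $SU$ does, and this is guaranteed on the subspace-embedding event.

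Next we apply \Cref{normaleq} to the sketched problem, with $SU$ in place of $C$ and $SAS^\top$ in place of $D$. This gives $U^\top S^\top S(A-U\widehat W U^\top)S^\top SU=0$. Subtracting the analogous expression with $W^*$, we get
\[
K(\widehat W-W^*)K = U^\top S^\top S B S^\top S U, \qquad K:=U^\top S^\top SU,\quad B:=A-UW^*U^\top .
\]
Here $B$ is symmetric because $W^*=U^\top AU$ is symmetric. We then invoke \Cref{lem:nuclearLSS} with this $B$. It requires $t\ge C_0 r\log(2r)$, which is what $t=\bigO(r\log r)$ is meant to supply. The lemma gives, with probability greater than $1-c_4(r,t)\sqrt r/\Delta$,
\[
\norm{K(\widehat W-W^*)K}_*<\Delta\norm{B}_* .
\]

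On the event that $S$ is an $\epsilon$-subspace embedding (probability at least $1-\delta$), \Cref{boundsingularvalue1} gives $\norm{K-I_r}_2\le\epsilon<1$. Hence $\norm{K^{-1}}_2\le(1-\epsilon)^{-1}$, and
\[
\norm{\widehat W-W^*}_*\le\norm{K^{-1}}_2^2\,\norm{K(\widehat W-W^*)K}_*<\frac{\Delta}{(1-\epsilon)^2}\norm{B}_* .
\]
Finally, the triangle inequality together with $\norm{U(\widehat W-W^*)U^\top}_*=\norm{\widehat W-W^*}_*$ (as $U$ is orthonormal) yields
\[
\norm{A-C\widehat MC^\top}_*\le\Bigl(1+\frac{\Delta}{(1-\epsilon)^2}\Bigr)\norm{A-CM^*C^\top}_* .
\]
The prefactor equals $1+c_5\sqrt r$. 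A union bound over the two failure events gives the stated probability.

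The only delicate point is the interaction between the two probabilistic events: the same $S$ must satisfy both \Cref{lem:nuclearLSS} and the embedding property. The union bound handles this, since neither argument conditions on the other. The one remaining thing to check is that the sample size $t$ in the hypothesis meets the threshold required by \Cref{lem:nuclearLSS}.
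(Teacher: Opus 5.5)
Your proposal is correct and is essentially the paper's proof: it repeats the argument for \Cref{main_gaussian}, with \Cref{lem:nuclearLSS} (applied to the fixed symmetric $B=A-UW^*U^\top$) replacing \Cref{lemma32} and $c_4$ replacing $c_1$. The sample-size condition you flag at the end needs no further checking, because the proof of \Cref{lem:nuclearLSS} never uses $t\ge C_0 r\log(2r)$; that threshold only ensures $c_4$ is an order-one constant.
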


\begin{proof}
    The proof is identical to that of \Cref{main_gaussian} with \Cref{lemma32} replaced by \Cref{lem:nuclearLSS} and $c_1$ replaced by $c_4$.
\end{proof}

\section{Numerical Experiments} \label{sec:experiments} In this section, we test \Cref{alg:symmetric_cur} against existing methods on synthetic and real-data examples. We assume the initial column index set $I$ of size $r$ is given, and we compare the following five methods:
\begin{itemize}
    \item \emph{Plain}: $A(\,:\,,I)A(I,I)^\dagger A(\,:\,,I)^\top$,
    \item \emph{Oversample \& Trunc.}: $A(\,:\,,I)A(I,I)_{\lfloor 0.8r\rfloor}^\dagger A(\,:\,,I)^\top$, where the subscript denotes truncation to rank $\lfloor 0.8r\rfloor$ before pseudoinversion\footnote{This is an adaptation, rather than a reproduction, of the indefinite Nystr\"om method proposed in \cite{Nakatsukasa2023}, which oversamples the column set beyond the target rank and then truncates the core matrix back to the target rank. In our setting, the column budget is fixed at exactly $r$, leaving no room for column oversampling. To emulate the stabilizing effect of truncation, we instead truncate the $r\times r$ core matrix to rank $\lfloor 0.8r\rfloor$. The factor $0.8$ is heuristic. The appropriate truncation level depends on the spectrum of $A(I,I)$, and in particular on how many of its eigenvalues can be considered reliable. Consequently, no single fixed fraction is optimal across all matrices. We do not investigate the choice of truncation level further here, since all methods are constrained to the same budget of $r$ columns.},
    \item \emph{Two-sided Proj.}: $A(\,:\,,I)A(\,:\,,I)^\dagger A \, A(\,:\,,I)A(\,:\,,I)^\dagger$,
    \item \emph{Indefinite Nystr\"om} (Gaussian): \Cref{alg:symmetric_cur} with a Gaussian sketch $G$,
    \item \emph{Indefinite Nystr\"om} (LSS): \Cref{alg:symmetric_cur} with a LSS sketch $S$.
\end{itemize}
The initial index set $I$ is obtained via Osinsky's column selection method \cite{osinsky2023closelaa} unless stated otherwise. In all experiments, we use the truncated SVD (TSVD) at rank $r$ as a lower-bound reference and report the relative nuclear-norm error $\frac{\norm{A - \widehat{A}}_*}{\norm{A}_*}$. All experiments were conducted in MATLAB R2024b using double-precision arithmetic.

In \Cref{subsec:effectOS}, we study the effect of oversampling on \Cref{alg:symmetric_cur}. \Cref{subsec:effectcol} compares all five methods on synthetic indefinite matrices with high-quality (Osinsky) and low-quality (uniform) column indices. \Cref{subsec:dataset} tests the methods on real datasets with two indefinite kernels.

\subsection{Effect of Oversampling} \label{subsec:effectOS}
We study the effect of the oversampling ratio $\lfloor t/r \rfloor$ on the accuracy of \Cref{alg:symmetric_cur} using two test matrices:
\begin{enumerate}
    \item $A_1 = V_1\Lambda_1V_1^\top \in \R^{n\times n}$, where $V_1 = \texttt{orth(randn(n))}$ is an incoherent eigenvector matrix and $\Lambda_1$ has diagonal entries decaying geometrically from $1$ to $10^{-14}$ for the first $200$ eigenvalues and equal to $10^{-14}$ for the remainder, with uniformly random signs.
    \item $A_2 = V_2\Lambda_2V_2^\top \in \R^{n\times n}$, where 
    \begin{equation*}
        V_2 = \texttt{orth(diag(randn(n,1))+1e-7*randn(n))}
    \end{equation*} is a coherent eigenvector matrix and $\Lambda_2$ has $50$ eigenvalues equal to $1$, the remainder equal to $10^{-14}$, with uniformly random signs.
\end{enumerate}
We take $n = 1000$, fix $r = 50$, and vary the oversampling ratio $\lfloor t/r \rfloor$ from $1$ to $10$. Matrix $A_2$ is the harder case, as coherent eigenvectors concentrate leverage scores, and the flat eigenvalue profile $(\lambda_1 = \cdots = \lambda_{50} = \pm 1)$ makes cancellations in $A(I,I)$ more likely.

The results are shown in \Cref{fig:oversampling}. For $A_1$ (\Cref{subfig:os1}), both sketches stabilize by oversampling ratio $2$. On the other hand, for $A_2$ (\Cref{subfig:os2}), the coherent structure makes the problem much more sensitive. LSS is highly unstable at low oversampling ratios and requires a ratio of at least $4.5$ to stabilize, which is close to $\log r = \log 50 \approx 4$ and is consistent with the $t = \bigO(r\log r)$ sample size established in \Cref{lem:nuclearLSS}, while Gaussian stabilizes much earlier at around $1.5$-$2$. This is consistent with the theory developed in \Cref{sec:theory}. In both cases, the approximation monotonically improves as the oversampling ratio increases, and both methods ultimately converge to the \emph{Two-sided Proj.} level, which is the best achievable by our algorithm (\Cref{alg:symmetric_cur}) given the column subset $C$.

It is worth emphasizing the appropriate reference point here. The truncated SVD provides a global lower bound, without any restriction to the column set $C$. By contrast, a natural target for any method restricted to the column set $C$ is \emph{Two-sided Proj.}, which computes $CC^\dagger A (C^\top)^\dagger C^\top$ onto $\range(C)$ and is optimal in Frobenius norm among approximations constrained by $C$ on both sides (\Cref{thm:general}). The gap between the other baselines and \emph{Two-sided Proj.} thus reflects a gap in how effectively the available column information is used, and solving the two-sided least-squares problem closes this gap.

\begin{figure}[htp]
    \centering
    \begin{subfigure}[t]{0.465\textwidth}
        \centering
        \includegraphics[width=\linewidth]{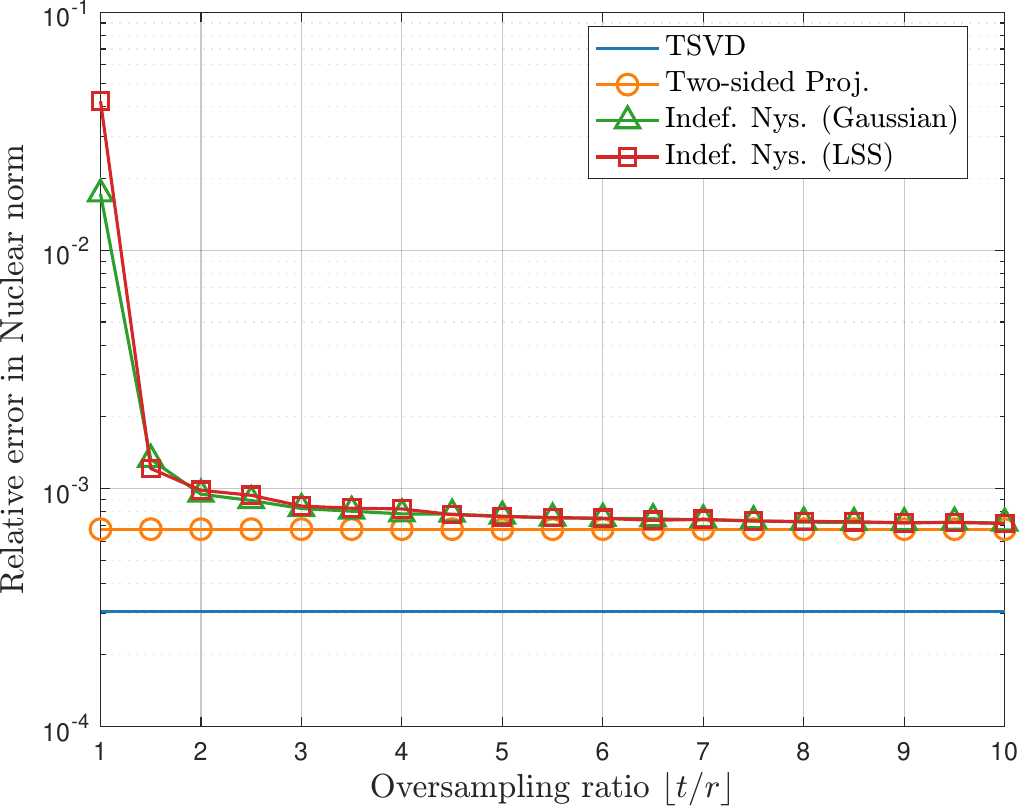}
        \label{subfig:os1}
    \end{subfigure}%
    \begin{subfigure}[t]{0.465\textwidth}
        \centering
        \includegraphics[width=\linewidth]{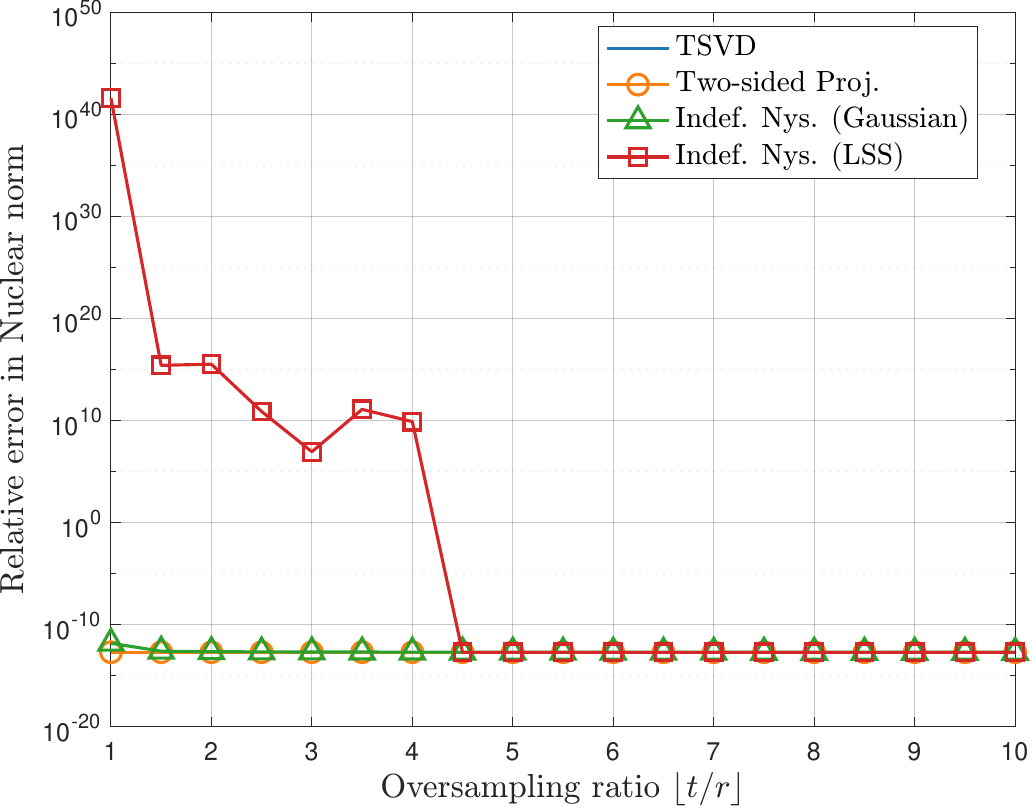}
        \label{subfig:os2}
    \end{subfigure}
    \caption{Effect of oversampling on \Cref{alg:symmetric_cur} ($n = 1000$, $r = 50$). Left: $A_1$, incoherent eigenvectors, geometric eigenvalue decay. Right: $A_2$, coherent eigenvectors, flat spectrum with jump. Gaussian sketch stabilizes at a smaller oversampling ratio than LSS, consistent with \Cref{sec:theory}. The coherent, jump-spectrum case (right) is considerably more challenging: LSS is catastrophically unstable at small oversampling ratios.}
    \label{fig:oversampling}
\end{figure}

\subsection{Effect of initial columns} \label{subsec:effectcol}
We compare all five methods on the matrices $A_1,A_2\in \mathbb{R}^{1000\times 1000}$ defined in \Cref{subsec:effectOS}. We test two column selection strategies: Osinsky's method \cite{osinsky2023closelaa} (high quality) and uniform random sampling (low quality). Here, we take $t = 2r$ for Gaussian and $t = \lfloor 2r\log r\rfloor$ for LSS.

The results are illustrated in \Cref{fig:synthetic}. For high-quality columns (left figures), all methods perform robustly. \emph{Oversample \& Trunc.} has delayed performance due to the core matrix requiring rank truncation proportional to the target rank. For example, in \Cref{subfig:syn21}, it achieves relative error of $\approx 10^{-13}$ only at $r = 70$ as its approximation rank is $70*0.8 = 56$. 

For low-quality columns (right figures), the approximation error is higher for all methods, with \emph{Plain} and \emph{Oversample \& Trunc.} becoming unstable and performing unreliably. Note that \emph{Indef. Nys.} (both Gaussian and LSS) follow \emph{Two-sided Proj.} closely throughout. This shows the robustness of \Cref{alg:symmetric_cur}, especially with LSS, which can be constructed without evaluating every entry of $A$.

\begin{figure}[htp]
    \centering
    \begin{subfigure}[t]{0.465\textwidth}
        \centering
        \includegraphics[width=\linewidth]{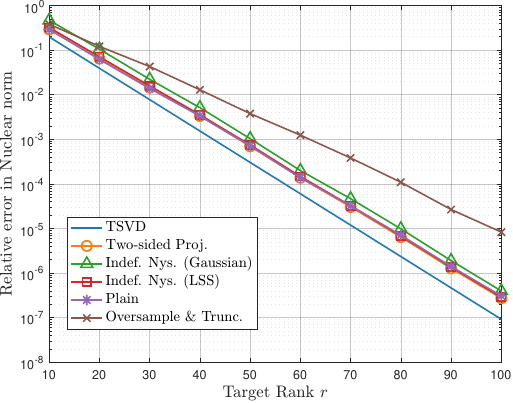}
        \label{subfig:syn11}
        \caption{$A_1$ (incoherent, decay), Osinsky}
    \end{subfigure}%
    \begin{subfigure}[t]{0.465\textwidth}
        \centering
        \includegraphics[width=\linewidth]{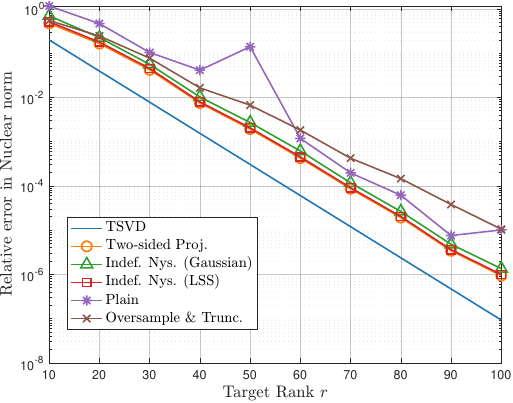}
        \label{subfig:syn12}
        \caption{$A_1$ (incoherent, decay), Uniform}
    \end{subfigure}
    \begin{subfigure}[t]{0.465\textwidth}
        \centering
        \includegraphics[width=\linewidth]{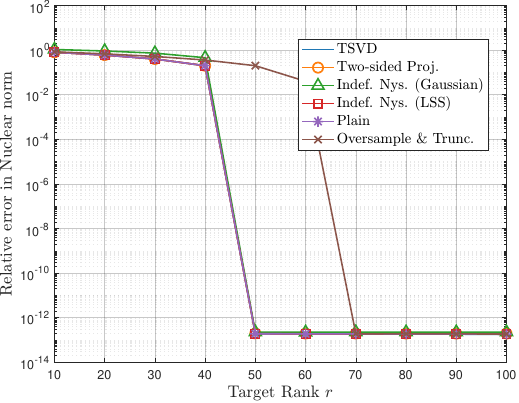}
        \label{subfig:syn21}
        \caption{$A_2$ (coherent, flat + jump), Osinsky}
    \end{subfigure}%
    \begin{subfigure}[t]{0.465\textwidth}
        \centering
        \includegraphics[width=\linewidth]{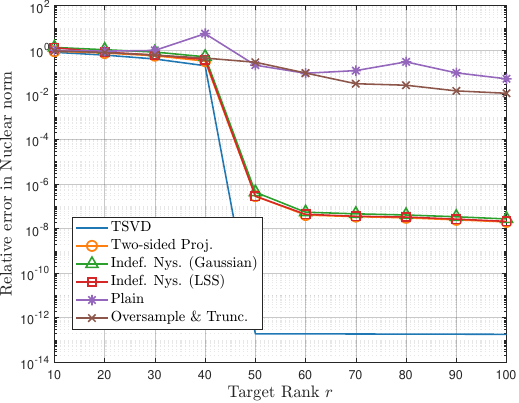}
        \label{subfig:syn22}
        \caption{$A_2$ (coherent, flat + jump), Uniform}
    \end{subfigure}
    \caption{Relative nuclear-norm error $\|A-\widehat{A}\|_*/\|A\|_*$ versus target rank $r$ for synthetic indefinite matrices ($n=1000$). Top row: $A_1$, incoherent eigenvectors with geometric eigenvalue decay. Bottom row: $A_2$, coherent eigenvectors with flat spectrum with jump. Left column: Osinsky's column selection (good quality). Right column: uniform column selection (poor quality). The proposed method is robust throughout, while other strategies may become unstable and behave unpredictably.}
    \label{fig:synthetic}
\end{figure}

\subsection{Dataset with indefinite kernels} \label{subsec:dataset}
The Nystr\"om method is widely used in kernel methods to improve scalability and efficiency. We evaluate the methods on four datasets from LIBSVM \cite{LIBSVM} and the UCI Machine Learning Repository \cite{UCIML}: \texttt{a9a} $(d = 123)$, \texttt{covertype} $(d = 54)$, \texttt{phishing} $(d = 68)$, and \texttt{skin\_nonskin} $(d = 3)$, where $d$ is the feature dimension. For each dataset, we sample $n = 1000$ data points uniformly at random and normalize each feature to have mean zero and variance one. We use two indefinite kernel functions:
\begin{enumerate}
    \item Thin plate spline: $k(x,y) = \norm{x - y}^2\log\norm{x-y}$;
    \item Sigmoid: $k(x,y) = \tanh{\left(\frac{x^\top y}{d}\right)}$.
\end{enumerate} 
The results for the sigmoid and thin plate spline kernels are shown in \Cref{fig:dataset_sigmoid,fig:dataset_thinplate}, respectively. In both cases, the column index set is obtained via Osinsky's method. We omit \emph{Indef. Nys.} (Gaussian) from these figures because forming $GAG^\top$ requires access to all $n^2$ entries of $A$.

\paragraph{Sigmoid kernel (\Cref{fig:dataset_sigmoid})} Across all four datasets, \emph{Indef. Nys.} (LSS) and \emph{Two-sided Proj.} track the TSVD lower bound closely. \emph{Plain} exhibits erratic, non-monotone errors on \texttt{a9a} and \texttt{phishing}, where $A(I,I)$ underestimates the eigenvalues of $A$ at certain target ranks. \emph{Oversample \& Trunc.} mitigates the instability but remains worse than the proposed method, as it truncates the core matrix to a smaller rank. The \texttt{skin\_nonskin} dataset achieves the fastest spectral decay and all methods follow the spectral decay quite robustly.

\paragraph{Thin plate spline kernel (\Cref{fig:dataset_thinplate})}
The thin plate spline kernel is less low-rank and approximately half the eigenvalues are negative, so errors are higher overall. Despite this, the qualitative picture mirrors the sigmoid case: \emph{Indef. Nys.} (LSS) and \emph{Two-sided Proj.} consistently perform robustly, while \emph{Plain} can be unstable and \emph{Oversample \& Trunc.} exhibits higher errors. Across both kernels and all datasets, \emph{Indef. Nys.} (LSS) performs at least as well as \emph{Two-sided Proj.} while requiring only $t^2$ matrix-entry evaluations after the initial column set is identified, in contrast to the full $n^2$ entry evaluations required by \emph{Two-sided Proj.} and Gaussian sketching.

\begin{figure}[htp]
    \centering
    \begin{subfigure}[t]{0.465\textwidth}
        \centering
        \includegraphics[width=\linewidth]{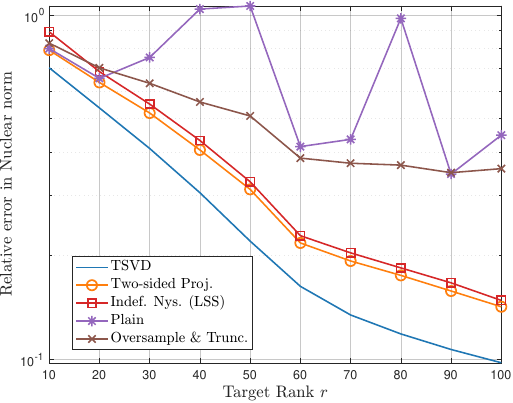}
        \caption{\texttt{a9a}}
        \label{subfig:a9a_sigmoid}
    \end{subfigure}%
    \begin{subfigure}[t]{0.465\textwidth}
        \centering
        \includegraphics[width=\linewidth]{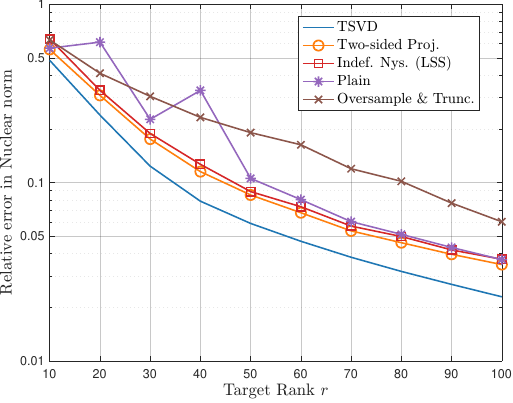}
        \caption{\texttt{covertype}}
        \label{subfig:covertype_sigmoid}
    \end{subfigure}
    \begin{subfigure}[t]{0.465\textwidth}
        \centering
        \includegraphics[width=\linewidth]{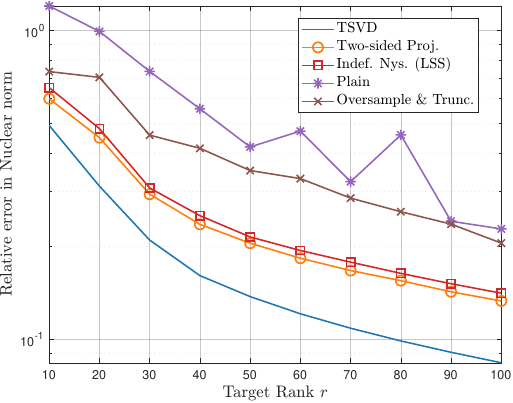}
        \caption{\texttt{phishing}}
        \label{subfig:phishing_sigmoid}
    \end{subfigure}%
    \begin{subfigure}[t]{0.465\textwidth}
        \centering
        \includegraphics[width=\linewidth]{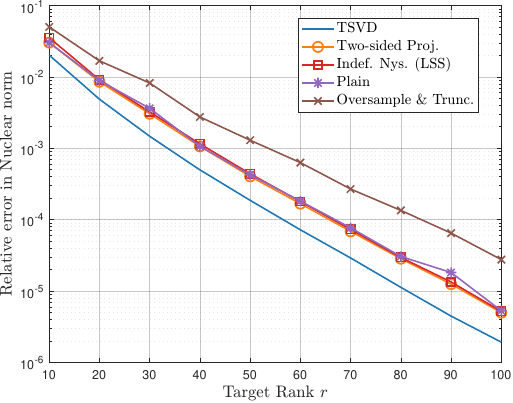}
        \caption{\texttt{skin\_nonskin}}
        \label{subfig:skinnonskin_sigmoid}
    \end{subfigure}
    \caption{Relative nuclear-norm error $\|A-\widehat{A}\|_*/\|A\|_*$ versus target rank
      $r$ for the sigmoid kernel on four datasets
      ($n=1000$, Osinsky column selection). The indefinite Nystr\"om method (LSS) and Two-sided
      Proj.\ track the TSVD lower bound throughout. The plain Nystr\"om method is unstable on
      \texttt{a9a}, \texttt{covertype}, and \texttt{phishing}.}
    \label{fig:dataset_sigmoid}
\end{figure}

\begin{figure}[htp]
    \centering
    \begin{subfigure}[t]{0.465\textwidth}
        \centering
        \includegraphics[width=\linewidth]{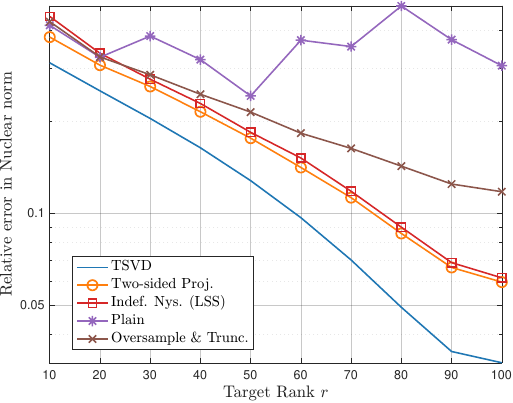}
        \caption{\texttt{a9a}}
        \label{subfig:a9a_thinplate}
    \end{subfigure}%
    \begin{subfigure}[t]{0.465\textwidth}
        \centering
        \includegraphics[width=\linewidth]{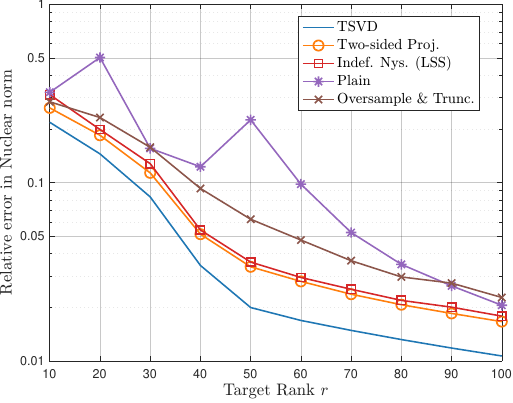}        
        \caption{\texttt{covertype}}
\label{subfig:covertype_thinplate}                
    \end{subfigure}
    \begin{subfigure}[t]{0.465\textwidth}
        \centering
        \includegraphics[width=\linewidth]{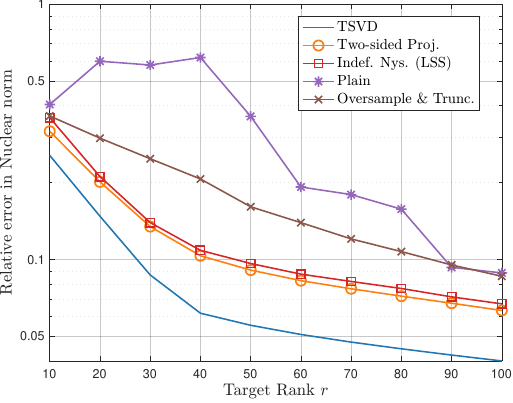}
        \caption{\texttt{phishing}}
        \label{subfig:phishing_thinplate}
    \end{subfigure}%
    \begin{subfigure}[t]{0.465\textwidth}
        \centering
        \includegraphics[width=\linewidth]{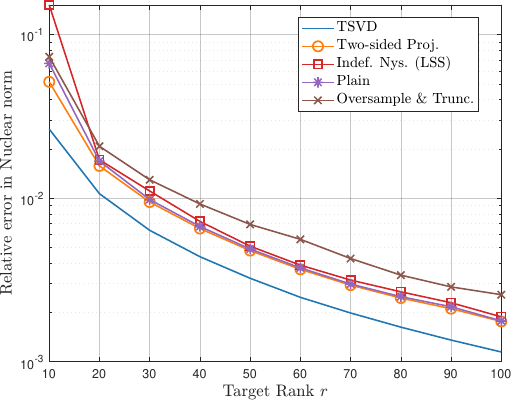}
        \caption{\texttt{skin\_nonskin}}
        \label{subfig:skinnonskin_thinplate}
    \end{subfigure}
    \caption{Same experiment as \Cref{fig:dataset_sigmoid} but with the thin plate
      spline kernel. The kernel is less low-rank
      than the sigmoid, so errors are higher overall. The indefinite
      Nystr\"om method (LSS) consistently matches Two-sided Proj. and avoids the
      instability of the plain Nystr\"om method on \texttt{a9a}, \texttt{covertype}, and \texttt{phishing}.}
    \label{fig:dataset_thinplate}
\end{figure}

\section{The use of AI assistance}
Parts of this project were carried out with assistance from ChatGPT-5.6-Sol. The contribution was to the leverage score analysis of \Cref{sec:lssampling}. Our first proof of the nuclear-norm bound partitioned the $t$ samples into $\lceil\sqrt r\,\rceil$ blocks and bounded the sampled quantity block by block, treating the samples within a block as mutually dependent; this led to a loose analysis and required a sketch size of $t=\mathcal{O}(r^{3/2}\log r)$. Prompted to look for a sharper argument, ChatGPT proposed a decoupling approach, which isolated the $t$ diagonal sample pairs and decoupled all $t(t-1)$ off-diagonal pairs at once through a random balanced partition. This removes the factor of $\sqrt r$ and reduces the requirement to $t=\mathcal{O}(r\log r)$, matching the typical theory for LSS in the literature \cite{mahoneya2009CUR}; it is the argument now given in \Cref{lem:nuclearLSS}. ChatGPT also helped to polish the exposition.

The authors verified every definition, lemma, and proof independently, and confirmed the cited sources; responsibility for any errors, and for the manuscript as a whole, rests entirely with the authors.

\bibliographystyle{siamplain}
\bibliography{references}

\end{document}